\titleformat{\chapter}[display]
{\normalfont\huge\bfseries}{\chaptertitlename\\thechapter}{20pt}{\Huge}
\titleformat{\subsubsection}[runin]
{\normalfont\normalsize\bfseries}{\thesubsubsection}{1em}{}
\titleformat{\paragraph}[runin]
{\normalfont\normalsize\bfseries}{\theparagraph}{1em}{}
\titleformat{\subparagraph}[runin]
{\normalfont\normalsize\bfseries}{\thesubparagraph}{1em}{}
\titlespacing*{\chapter} {0pt}{50pt}{40pt}
\titlespacing*{\section} {0pt}{3.5ex plus 1ex minus .2ex}{2.3ex plus .2ex}
\titlespacing*{\subsection} {0pt}{3.25ex plus 1ex minus .2ex}{1.5ex plus .2ex}
\titlespacing*{\subsubsection}{0pt}{3.25ex plus 1ex minus .2ex}{1.5ex plus .2ex}
\titlespacing*{\paragraph} {0pt}{3.25ex plus 1ex minus .2ex}{1em}
\titlespacing*{\subparagraph} {\parindent}{3.25ex plus 1ex minus .2ex}{1em}
\subjclass[2010]{Primary 14R15, 16W20} 
\newtheorem{theorem}{Theorem}[section]
\newtheorem{lemma}[theorem]{Lemma}
\newtheorem{proposition}[theorem]{Proposition}
\newtheorem{conjecture}[theorem]{Conjecture}
\theoremstyle{definition}
\newtheorem{definition}[theorem]{Definition}
\theoremstyle{remark}
\newtheorem{remark}[theorem]{Remark}
\DeclareMathOperator{\Jac}{Jac}
\begin{document}
\title{Involutions and the Jacobian conjecture}
\author{Vered Moskowicz}
\address{Department of Mathematics, Bar-Ilan University, Ramat-Gan 52900, Israel.}
\email{vered.moskowicz@gmail.com}
\thanks{The author was partially supported by an Israel-US BSF grant \#2010/149}

\begin{abstract}
The famous Jacobian conjecture asks if an endomorphism $f$ of $K[x,y]$
($K$ is a characteristic zero field) 
that satisfies
$\Jac(f(x),f(y))\in K^*$ is invertible.

Let $\alpha$ be the exchange involution on $K[x,y]$: $\alpha(x)= y$ and $\alpha(y)= x$.
An $\alpha$-endomorphism $f$ of $K[x,y]$ is an endomorphism of $K[x,y]$ that preserves the involution $\alpha$:
$f \alpha= \alpha f$.
It was shown in ~\cite[Proposition 4.1]{mos val} that if $f$ is an $\alpha$-endomorphism 
that satisfies $\Jac(f(x),f(y))\in K^*$, 
then $f$ is invertible.
Based on this, we bring more results that imply that a given endomorphism $f$ 
that satisfies $\Jac(f(x),f(y))\in K^*$ 
and additional conditions involving involutions, 
is invertible.
\end{abstract}

\maketitle

\section{Introduction}
Let $K$ be a characteristic zero field.
The famous Jacobian conjecture asks if an endomorphism 
$f:K[x,y]\to K[x,y]$ that satisfies
$\Jac(f(x),f(y))\in K^*$ is invertible.

We suggest some partial answers to this conjecture,
based on the following previous result ~\cite[Proposition 4.1]{mos val}:
``If $f$ is an $\alpha$-endomorphism that satisfies
$\Jac(f(x),f(y))\in K^*$, then $f$ is invertible".

Where $\alpha$ is the exchange involution on $K[x,y]$,
$\alpha(x)= y$, $\alpha(y)= x$,
and an $\alpha$-endomorphism is an endomorphism of $K[x,y]$ 
that preserves the involution $\alpha$:
$f \alpha= \alpha f$.

Some of the results we bring here are mentioned, without a proof, 
in ~\cite{vered beta gamma}, while others are new. 

\section{Two equivalent conjectures to the Jacobian conjecture}
In Theorem \ref{TFAE} we show that the Jacobian conjecture is equivalent to the 
$\gamma,\delta$ conjecture \ref{conj gamma delta}
and to the $g,h$ conjecture \ref{conj g,h}.

By an involution on $K[x,y]$ we mean an automorphism of order $2$
(since $K[x,y]$ is commutative, any anti-automorphism is an automorphiam).
We wish to consider not only the exchange involution $\alpha$, 
but also any involution on $K[x,y]$. 

\begin{lemma}\label{conjugate}
Assume $\gamma$ is any involution on $K[x,y]$.
Then there exists an automorphism $g$ of $K[x,y]$ such that
$\gamma= g^{-1} \alpha g$.
\end{lemma}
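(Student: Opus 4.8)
The plan is to exploit the structure theorem for the automorphism group of $K[x,y]$ (the Jung--van der Kulk theorem): $\Aut(K[x,y])$ is generated by affine automorphisms and triangular (de Jonqui\`eres) automorphisms, and in fact is an amalgamated free product of these two subgroups. The key conceptual point is that two involutions are conjugate in $\Aut(K[x,y])$ as soon as they are ``of the same type'', and one must show every involution is of the same type as the linear exchange involution $\alpha$.

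First I would record the elementary observation that $\gamma$, being an automorphism of finite order $2$, is conjugate to an automorphism lying in a maximal bounded (equivalently, ``algebraic'' or linearizable) subgroup. This is where I would invoke the known result that every finite subgroup of $\Aut(K[x,y])$ is conjugate to a subgroup of either the affine group or the triangular group (a consequence of the amalgamated-product structure and the fact that finite groups acting on the associated tree have a fixed point, by Serre's theory); in particular there is an automorphism $g_1$ with $g_1 \gamma g_1^{-1}$ linear, i.e.\ given by a matrix $A \in \GL_2(K)$ with $A^2 = I$. Next I would diagonalize: since $\Char K = 0$, such an $A$ is conjugate over $K$ (after possibly a further field-rational change, using that the eigenvalues are $\pm 1$) to a diagonal matrix $D = \operatorname{diag}(\pm 1, \pm 1)$; absorbing the conjugating matrix into a linear automorphism $g_2$, we get $g_2 g_1 \gamma (g_2 g_1)^{-1}$ equal to one of $\ide$, $\operatorname{diag}(1,-1)$, $\operatorname{diag}(-1,-1)$, or $\operatorname{diag}(-1,1)$. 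The case $\ide$ is excluded since $\gamma$ has order exactly $2$.

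The remaining task is to conjugate each of the three non-trivial diagonal involutions to $\alpha$, which has matrix $\begin{pmatrix} 0 & 1 \\ 1 & 0\end{pmatrix}$. But $\alpha$ itself is linear of order $2$, so its matrix is $\GL_2(K)$-conjugate to its own diagonalization $\operatorname{diag}(1,-1)$ (eigenvectors $x+y$ and $x-y$, legitimate since $2$ is invertible). Thus $\alpha$ is conjugate, via a linear automorphism, to $\operatorname{diag}(1,-1)$; by symmetry (swap the roles of $x$ and $y$) it is also conjugate to $\operatorname{diag}(-1,1)$. For $\operatorname{diag}(-1,-1) = -I$: this is the one genuinely different conjugacy class among \emph{linear} involutions, but I claim it is still conjugate to $\alpha$ \emph{inside} $\Aut(K[x,y])$ — wait, this needs care, since $-I$ has a $2$-dimensional $(-1)$-eigenspace while $\alpha$ has a $1$-dimensional one, and conjugation by a linear automorphism cannot change eigenspace dimensions; so I would instead handle $-I$ by a nonlinear conjugation or, more likely, observe that the hypothesis forces $\gamma \neq -I$ in the cases of interest — actually the cleanest route is to note $-I$ fixes only the constants in $K[x,y]$, whereas one can check directly whether the statement as phrased even intends to include it. I expect the intended argument simply produces the diagonal form and matches it to $\alpha$'s diagonal form, so the \textbf{main obstacle} is precisely pinning down whether $-I$ occurs and, if so, supplying the nonlinear conjugation (e.g.\ composing with a triangular map to merge the two eigendirections) — the rest is routine linear algebra plus a citation of the finite-subgroup-conjugacy theorem. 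Once a conjugator $h$ with $h\gamma h^{-1} = \alpha$ is produced, set $g = h$ to obtain $\gamma = g^{-1}\alpha g$, as required.
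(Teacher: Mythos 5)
Your route is the same as the paper's: reduce $\gamma$ to a linear involution using the Jung--van der Kulk amalgamated-product structure of $\Aut(K[x,y])$ together with Serre's fixed-point theorem for finite groups acting on trees (the paper cites exactly Cohn and Serre for this), then finish by ``a direct calculation that any linear automorphism of order $2$ is conjugate to $\alpha$.'' The preliminary steps you outline are all sound and routine in characteristic zero: a finite-order automorphism is conjugate into the affine or triangular factor, the translation part is killed and a finite-order triangular map is linearized by averaging, and a linear involution is diagonalized to $\operatorname{diag}(\pm1,\pm1)$; moreover $\alpha$ is indeed linearly conjugate to $\operatorname{diag}(1,-1)$ via the eigenbasis $x+y,\,x-y$. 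But the obstacle you flag at $-\ide$ is not a loose end to be tidied up --- it is fatal, and no nonlinear conjugation can rescue it. If $\gamma=g^{-1}\alpha g$ then $g$ restricts to an isomorphism from the fixed subalgebra $K[x,y]^{\gamma}$ onto $K[x,y]^{\alpha}=K[x+y,xy]$, which is a polynomial ring in two variables. However $K[x,y]^{-\ide}=K[x^2,xy,y^2]\cong K[a,b,c]/(ac-b^2)$ is not a polynomial ring: it is not a UFD, since $a,b,c$ are irreducible there and $ac=b^2$ gives two essentially distinct factorizations (equivalently, its spectrum has an $A_1$-singularity at the origin; or note that $\alpha$ fixes the line $x=y$ pointwise while $-\ide$ fixes only the origin). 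Hence $-\ide$ is an involution of $K[x,y]$ that is \emph{not} conjugate to $\alpha$.

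So the lemma as stated is false, and the asserted ``direct calculation'' in the paper's proof silently fails at exactly the matrix $-I$ that your eigenspace count isolates: the correct classification is that every involution of $K[x,y]$ is conjugate to exactly one of $\alpha$ (equivalently $\operatorname{diag}(1,-1)$) and $-\ide$. Your proposal is therefore as complete as the statement permits; what is missing is not an idea on your side but a hypothesis in the lemma, e.g.\ that $\gamma$ is not conjugate to $-\ide$ (equivalently, that the $(-1)$-eigenspace of its linearization is one-dimensional, or that its fixed subalgebra is a polynomial ring). Any later use of the lemma for an arbitrary involution inherits this gap.
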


In ~\cite{bell} J. Bell has sketched a proof for the analogue result in the first Weyl algebra. 
His sketch of proof is applicable to $K[x,y]$.

\begin{proof}
Use ~\cite[Proposition 8.9]{cohn} (or ~\cite{trees} with the fact that triangular automorphisms has infinite order), 
and a direct calculation that any linear automorphism of order $2$ is conjugate to $\alpha$.
\end{proof}

\begin{definition}
Let $f$ be an endomorphism of $K[x,y]$ and let 
$\gamma$ and $\delta$ be any two involutions on $K[x,y]$.
We say that $f$ is \begin{itemize}
\item a $\gamma$-endomorphism of $K[x,y]$, 
if $f \gamma= \gamma f$.
\item a $\gamma,\delta$-endomorphism of $K[x,y]$, 
if $f \gamma= \delta f$.
\end{itemize}
(If $\delta= \gamma$, then a 
$\gamma,\gamma$-endomorphism is just a $\gamma$-endomorphism).
\end{definition}

Notice that in the above definition, it is not assumed that the endomorphism $f$ satisfies
$\Jac(f(x),f(y))\in K^*$.

An immediate generalization of ~\cite[Proposition 4.1]{mos val} is as follows:

\begin{theorem}\label{starred true generalized}
Assume $\gamma$ and $\delta$ are two involutions on $K[x,y]$.
Assume $f$ is a $\gamma,\delta$-endomorphism of $K[x,y]$ 
that satisfies $\Jac(f(x),f(y))\in K^*$.
Then $f$ is invertible.
\end{theorem}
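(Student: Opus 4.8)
The plan is to reduce to the already-known single-involution case (that of $\alpha$) by conjugating $f$ simultaneously on its source and target. First I would invoke Lemma \ref{conjugate} for both involutions: there are automorphisms $g,h$ of $K[x,y]$ with $\gamma= g^{-1}\alpha g$ and $\delta= h^{-1}\alpha h$. Set $\tilde f := h f g^{-1}$, which is an endomorphism of $K[x,y]$, being a composite of endomorphisms.

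Next I would check that $\tilde f$ is an $\alpha$-endomorphism. From $\gamma= g^{-1}\alpha g$ one gets $g^{-1}\alpha= \gamma g^{-1}$, and from $\delta= h^{-1}\alpha h$ one gets $h\delta= \alpha h$. Using the hypothesis $f\gamma= \delta f$ in the middle step,
\[
\tilde f\alpha= hfg^{-1}\alpha= hf\gamma g^{-1}= h\delta f g^{-1}= \alpha h f g^{-1}= \alpha\tilde f ,
\]
so $\tilde f\alpha= \alpha\tilde f$.

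Then I would verify that $\tilde f$ still satisfies the Jacobian hypothesis. By the chain rule for the Jacobian, for endomorphisms $A,B$ of $K[x,y]$ one has $\Jac\big((AB)(x),(AB)(y)\big)= A\!\left(\Jac(B(x),B(y))\right)\cdot\Jac(A(x),A(y))$; applying this twice to $\tilde f= h f g^{-1}$ gives
\[
\Jac(\tilde f(x),\tilde f(y))= h\!\left(f\!\left(\Jac(g^{-1}(x),g^{-1}(y))\right)\right)\cdot h\!\left(\Jac(f(x),f(y))\right)\cdot\Jac(h(x),h(y)) .
\]
Since $g^{-1}$ and $h$ are automorphisms, their Jacobians lie in $K^*$ (the easy direction: differentiate $gg^{-1}= \ide$), and $\Jac(f(x),f(y))\in K^*$ by assumption. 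As endomorphisms of $K[x,y]$ fix $K$ pointwise, the displayed product lies in $K^*$.

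Finally, $\tilde f$ is an $\alpha$-endomorphism with $\Jac(\tilde f(x),\tilde f(y))\in K^*$, so by \cite[Proposition 4.1]{mos val} it is invertible; hence $f= h^{-1}\tilde f g$ is a composite of automorphisms, therefore invertible. The only points needing care — rather than a real obstacle — are orienting the two conjugations correctly so that $\tilde f$ intertwines $\alpha$ with itself on both sides at once, and the bookkeeping in the Jacobian chain rule; the substance is carried entirely by Lemma \ref{conjugate} and the single-involution result \cite[Proposition 4.1]{mos val}.
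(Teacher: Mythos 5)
Your proposal is correct and follows essentially the same route as the paper: conjugate both involutions to $\alpha$ via Lemma \ref{conjugate}, observe that $hfg^{-1}$ is an $\alpha$-endomorphism with Jacobian in $K^*$ by the chain rule, and apply \cite[Proposition 4.1]{mos val}. The only difference is that you spell out the intertwining computation and the Jacobian bookkeeping in more detail than the paper does.
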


\begin{proof}
{}From Lemma \ref{conjugate}, there exists an automorphism $g$ of $K[x,y]$ 
such that $\gamma= g^{-1} \alpha g$,
and there exists an automorphism $h$ of $K[x,y]$ 
such that $\delta= h^{-1} \alpha h$.

$f$ is a $\gamma,\delta$-endomorphism, so by definition,
$f \gamma= \delta f$.
So, $f g^{-1} \alpha g= h^{-1} \alpha h f$.
Hence, $(h f g^{-1}) \alpha = \alpha (h f g^{-1})$.
Therefore, $h f g^{-1}$ is an $\alpha$-endomorphism.
{}From the chain rule we have 
$$
\Jac((hfg^{-1})(x), (hfg^{-1})(y)) \in K^*
$$

($g$ and $h$ are automorphisms of $K[x,y]$, 

so $\Jac(g^{-1}(x),g^{-1}(y)) \in K^*$ and $\Jac(h(x),h(y)) \in K^*$.

$\Jac(f(x),f(y)) \in K^*$ by assumption).

Apply ~\cite[Proposition 4.1]{mos val} to $hfg^{-1}$ 
and get that $hfg^{-1}$ is invertible, hence $f$ is invertible.
\end{proof}

\begin{remark}
Given two rings with involution $(R_1,\epsilon_1)$ and $(R_2,\epsilon_2)$,
we say that $f$ is an involutive endomorphism from 
$(R_1,\epsilon_1)$ to $(R_2,\epsilon_2)$,
if $f \epsilon_1= \epsilon_2 f$.

In particular, if $R_1= R_2$ then such $f$ is just an 
$\epsilon_1,\epsilon_2$-endomorphism.

Hence, Theorem \ref{starred true generalized} says the following:
Let $\gamma$ and $\delta$ be two involutions on $K[x,y]$.
Let $f$ be an involutive endomorphism from 
$(K[x,y],\gamma)$ to $(K[x,y],\delta)$
that satisfies $\Jac(f(x),f(y))\in K^*$.
Then $f$ is invertible.
\end{remark}

In view of Theorem \ref{starred true generalized}, given any endomorphism 
$f$ of $K[x,y]$ that satisfies $\Jac(f(x),f(y))\in K^*$,
one wishes to be able to find two involutions 
$\gamma$ and $\delta$ on $K[x,y]$, 
such that $f$ is a  $\gamma,\delta$-endomorphism.
Hence we suggest the following conjectures:

\begin{conjecture}[The $\gamma,\delta$ conjecture]\label{conj gamma delta}
Assume $f$ is an endomorphism of $K[x,y]$ that satisfies
$\Jac(f(x),f(y))\in K^*$.
Then there exist involutions $\gamma$ and $\delta$ on $K[x,y]$ such that 
$f$ is a $\gamma,\delta$-endomorphism.
\end{conjecture}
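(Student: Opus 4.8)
The statement is equivalent to the following reformulation, obtained exactly as in the proof of Theorem \ref{starred true generalized}: by Lemma \ref{conjugate} every involution is conjugate to $\alpha$, so producing involutions $\gamma=g^{-1}\alpha g$ and $\delta=h^{-1}\alpha h$ with $f\gamma=\delta f$ amounts to producing automorphisms $g,h$ of $K[x,y]$ for which $hfg^{-1}$ commutes with $\alpha$. Recall that an $\alpha$-endomorphism is precisely one of the form $x\mapsto P(x,y),\ y\mapsto P(y,x)$. Thus the plan is to bring $f$, by composing with automorphisms on both sides, into this $\alpha$-symmetric normal form. I will in fact try to take $\gamma=\alpha$ (so $g=\mathrm{id}$) and construct $\delta$ directly, so that the attack is not restricted to the case where $f$ is already known to be invertible.

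First I would record that $f$ is injective: since $\Jac(f(x),f(y))\in K^*$ is nonzero, $f(x)$ and $f(y)$ are algebraically independent, so $f$ embeds $K[x,y]$ isomorphically onto its image $A:=f(K[x,y])$. This injectivity is available unconditionally, \emph{without} assuming $f$ invertible, and it lets me transport the exchange involution across $f$: define $\delta_A\colon A\to A$ by $\delta_A(f(p))=f(\alpha(p))$. Well-definedness is immediate from injectivity of $f$, and $\delta_A$ is a $K$-algebra involution of $A$ satisfying $f\alpha=\delta_A f$ by construction. Hence $f$ is already an involutive morphism from $(K[x,y],\alpha)$ onto $(A,\delta_A)$; the only thing standing between this and the conjecture is that $\delta_A$ lives on the subalgebra $A$ rather than on all of $K[x,y]$.

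The remaining, and decisive, step is to extend $\delta_A$ to an involution $\delta$ of the whole ring $K[x,y]$; once this is done, $\gamma=\alpha$ and $\delta$ witness the conjecture. If $f$ happens to be surjective then $A=K[x,y]$, no extension is required, and $\delta=f\alpha f^{-1}$ finishes the argument, so the content is entirely in the case $A\subsetneq K[x,y]$. To attack the extension I would pass to fraction fields, where $f$ induces an embedding $f^{\ast}\colon K(x,y)\hookrightarrow K(x,y)$ with image $L:=K(f(x),f(y))$ of transcendence degree $2$, so that $d:=[K(x,y):L]$ is finite; the transported $\delta_L$ is an involution of $L$, and I would try to extend it across the finite extension $K(x,y)/L$ and then show the extension preserves the polynomial ring (exploiting the two-variable structure, e.g.\ Abhyankar--Moh-type normal forms for the $\alpha$-symmetric generator $P$). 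When $d=1$ the map is birational, and a birational endomorphism satisfying the Jacobian condition is an automorphism, which returns us to the surjective case.

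The hard part is exactly this extension problem, and it cannot be genuinely easier than the Jacobian conjecture: by Theorem \ref{starred true generalized}, producing \emph{any} pair of involutions intertwining $f$ already forces $f$ to be invertible, so any unconditional extension of $\delta_A$ to $K[x,y]$ would prove $d=1$, i.e.\ that $f$ is an automorphism. This is why the statement is recorded as a conjecture and, via Theorem \ref{TFAE}, is equivalent to the Jacobian conjecture itself. Accordingly, the realistic targets for a genuine proof are (i) to resolve the extension under additional finiteness or degree hypotheses on $f$ (for instance $f$ finite over its image, or $d$ bounded a priori), and (ii) to reduce the abstract extension of $\delta_A$ to a concrete, checkable algebraic condition on the $\alpha$-symmetric generator $P$, which is where I expect the main difficulty to concentrate.
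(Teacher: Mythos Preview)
The statement is Conjecture~\ref{conj gamma delta}, and the paper does \emph{not} prove it: Theorem~\ref{TFAE} shows it is equivalent to the Jacobian conjecture, and it is left open. You recognize this explicitly in your final paragraph, so what you have written is not a proof but an (accurate) analysis that reaches the same conclusion the paper does.

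Your reduction is in fact the paper's own. Your involution $\delta_A$ on the image $A=f(K[x,y])$ is exactly the involution $\sigma_0$ on $T=K[P,Q]$ introduced at the beginning of Section~3, and your ``remaining, and decisive, step'' of extending $\delta_A$ to an involution on all of $K[x,y]$ is precisely what the paper calls the extension condition. Theorem~\ref{ext thm} and the sentence following it record exactly your observation: $f$ satisfies the extension condition $\Longleftrightarrow$ $f$ is a $\gamma,\delta$-endomorphism $\Longleftrightarrow$ $f$ is invertible. So your identification of the obstruction is correct and coincides with the paper's.

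Your passage to fraction fields and the proposal to extend across the finite extension $K(x,y)/L$ goes a step beyond what the paper records, but as you yourself note, any unconditional success there would already prove the Jacobian conjecture; this is a reformulation of the difficulty, not a gap in your reasoning.
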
  

\begin{conjecture}[The $g,h$ conjecture]\label{conj g,h}
Assume $f$ is an endomorphism of $K[x,y]$ that satisfies
$\Jac(f(x),f(y))\in K^*$.
Then there exist automorphisms $g$ and $h$ of $K[x,y]$ such that 
$hfg^{-1}$ is an $\alpha$-endomorphism.
\end{conjecture}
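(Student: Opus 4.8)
The plan is to prove the conjecture by producing the automorphisms $g$ and $h$ explicitly, reducing the whole problem to the construction of a single involution on $K[x,y]$. Write $P := f(x)$ and $Q := f(y)$. Since $\Jac(P,Q)\in K^*$, the polynomials $P$ and $Q$ are algebraically independent over $K$: a nontrivial relation $R(P,Q)=0$ would give, after differentiating, $R_P\,\nabla P + R_Q\,\nabla Q = 0$ with $\nabla P$ and $\nabla Q$ linearly independent, forcing $R$ to be constant. Hence the image $A := K[P,Q]$ is a polynomial ring in the two generators $P,Q$, and the assignment $P\mapsto Q$, $Q\mapsto P$ extends uniquely to a well-defined $K$-algebra involution $\delta_0$ of $A$.

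The first main step is to show that $\delta_0$ extends to an involution $\delta$ of the whole ring $K[x,y]$; granting this, the conjecture follows at once. Take $\gamma := \alpha$, so that one may choose $g := \ide$ in Lemma \ref{conjugate}, and check on the generators that $f$ is an $\alpha,\delta$-endomorphism: one has $f\alpha(x) = f(y) = Q = \delta(P) = \delta f(x)$ and $f\alpha(y) = f(x) = P = \delta(Q) = \delta f(y)$, both using that $\delta$ restricts to $\delta_0$ on $A$. Now apply Lemma \ref{conjugate} to the involution $\delta$ to obtain an automorphism $h$ with $\delta = h^{-1}\alpha h$. Then $h f g^{-1} = hf$, and $(hf)\alpha = h(\delta f) = h h^{-1}\alpha h f = \alpha(hf)$, so $hfg^{-1}$ is an $\alpha$-endomorphism, exactly as the conjecture demands.

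For the extension step I would pursue two routes. The direct route is to build $\delta$ by hand: because $\alpha$ on the source corresponds to $\delta_0$ on the image $A$, one needs only to prescribe $\delta$ on directions transverse to $A$ and check consistency. When $f$ is finite, so that $x$ and $y$ are integral over $A$, an averaging or normalization argument over the swap $\delta_0$ should manufacture such a $\delta$; more generally one can try to import the valuation-theoretic control of $\alpha$ developed in \cite{mos val}, where the interaction of the exchange involution with the relevant filtrations is already understood. The indirect route is to argue abstractly that any order-two $K$-automorphism of the polynomial subalgebra $A$ lifts to an order-two $K$-automorphism of $K[x,y]$.

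The hard part will be precisely this extension of $\delta_0$, and I expect it to be the genuine obstacle rather than a technicality. If $\delta$ existed, then Theorem \ref{starred true generalized} would force the given $f$ to be invertible; carrying out the extension for every endomorphism $f$ with $\Jac(f(x),f(y))\in K^*$ is therefore no easier than the Jacobian conjecture itself, in line with the equivalence recorded in Theorem \ref{TFAE}. Consequently neither the finiteness of $f$ nor the automatic lift-ability of $\delta_0$ may be assumed for free, and any complete proof must confront the same difficulty that keeps the Jacobian conjecture open.
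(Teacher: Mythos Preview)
The statement you are attempting to prove is labeled a \emph{conjecture} in the paper, not a theorem; the paper does not supply a proof of it. What the paper does (Theorem~\ref{TFAE}) is show that the $g,h$ conjecture is \emph{equivalent} to the Jacobian conjecture, so there is no ``paper's own proof'' against which your attempt can be compared.

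That said, your reduction is correct and in fact coincides with the paper's own framework. What you call $\delta_0$ is exactly the involution the paper denotes $\sigma_0$ on $T=K[P,Q]$ at the start of Section~3, and your ``extension step'' is precisely the paper's \emph{extension condition}. Theorem~\ref{ext thm} confirms the diagnosis you reach in your last paragraph: $\sigma_0$ extends to an involution of $K[x,y]$ if and only if $f$ is invertible. So extending $\delta_0$ for every such $f$ is literally the Jacobian conjecture, not merely ``no easier'' than it.

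The genuine gap is therefore the one you already name. Neither of your two proposed routes can be completed in general: the averaging/normalization argument in the finite case and the abstract lifting of an order-two automorphism of the subalgebra both presuppose control over $K[x,y]$ relative to $K[P,Q]$ that is unavailable without already knowing $f$ is onto. Your write-up is an accurate reduction of Conjecture~\ref{conj g,h} to the extension problem --- which is exactly what the paper records --- but it is not, and at present cannot be turned into, a proof.
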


We have:
\begin{theorem}\label{TFAE}
TFAE: \begin{itemize}
\item [(1)] The Jacobian conjecture.
\item [(2)] The $\gamma,\delta$ conjecture.
\item [(3)] The $g,h$ conjecture.
\end{itemize}
Where $\gamma$ and $\delta$ are involutions on $K[x,y]$,
and $g$ and $h$ are automorphisms of $K[x,y]$.
\end{theorem}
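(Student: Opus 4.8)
The plan is to establish the cycle of implications $(1) \Rightarrow (2) \Rightarrow (3) \Rightarrow (1)$, together with the easy reverse passage from $(3)$ to $(2)$, so that all three statements are seen to be equivalent. Two ingredients carry the whole argument: Lemma \ref{conjugate}, which writes every involution as a conjugate $g^{-1}\alpha g$ of the exchange involution, and Theorem \ref{starred true generalized} (equivalently \cite[Proposition 4.1]{mos val} together with the chain rule), which is already available.

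First I would prove $(1) \Rightarrow (2)$. Assume the Jacobian conjecture and let $f$ be an endomorphism of $K[x,y]$ with $\Jac(f(x),f(y)) \in K^*$. Then $f$ is an automorphism, so put $\gamma= \alpha$ and $\delta= f\alpha f^{-1}$; since $\alpha$ is an automorphism of order $2$ and $f$ is an automorphism, $\delta$ is again an automorphism of order $2$, i.e. an involution, and $f\gamma= f\alpha= (f\alpha f^{-1})f= \delta f$. Thus $f$ is a $\gamma,\delta$-endomorphism, which is exactly what $(2)$ asserts.

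Next, $(2) \Leftrightarrow (3)$. Given involutions $\gamma,\delta$ realizing $f$ as a $\gamma,\delta$-endomorphism, Lemma \ref{conjugate} supplies automorphisms $g,h$ with $\gamma= g^{-1}\alpha g$ and $\delta= h^{-1}\alpha h$, and the short computation already carried out in the proof of Theorem \ref{starred true generalized} shows that $(hfg^{-1})\alpha= \alpha(hfg^{-1})$, i.e. $hfg^{-1}$ is an $\alpha$-endomorphism; this is $(3)$. Conversely, given automorphisms $g,h$ with $hfg^{-1}$ an $\alpha$-endomorphism, set $\gamma= g^{-1}\alpha g$ and $\delta= h^{-1}\alpha h$, which are involutions, and reverse the same computation to get $f\gamma= \delta f$, which is $(2)$. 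Note that both $(2)$ and $(3)$ carry the standing hypothesis $\Jac(f(x),f(y)) \in K^*$, which is untouched by these manipulations.

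Finally, $(3) \Rightarrow (1)$ is immediate from what is already known: if $hfg^{-1}$ is an $\alpha$-endomorphism then, since $g$ and $h$ are automorphisms, the chain rule gives $\Jac((hfg^{-1})(x),(hfg^{-1})(y)) \in K^*$, so by \cite[Proposition 4.1]{mos val} the map $hfg^{-1}$ is invertible and hence so is $f$; equivalently one applies Theorem \ref{starred true generalized} directly to the $\gamma,\delta$-endomorphism furnished by $(2)$. As $f$ was an arbitrary endomorphism with unit Jacobian, the Jacobian conjecture follows, closing the cycle. I do not expect a genuine obstacle here: the real content sits in Lemma \ref{conjugate} and in \cite[Proposition 4.1]{mos val}, and the only points needing a moment's care are the verification that a conjugate of an involution by an automorphism is again an involution, and the bookkeeping that the Jacobian hypothesis is preserved throughout.
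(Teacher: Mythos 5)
Your proposal is correct and follows essentially the same route as the paper: the cycle $(1)\Rightarrow(2)\Rightarrow(3)\Rightarrow(1)$ using Lemma \ref{conjugate} and \cite[Proposition 4.1]{mos val}, with only the cosmetic difference that you take $\gamma=\alpha$, $\delta=f\alpha f^{-1}$ where the paper takes $\gamma=f^{-1}\alpha f$, $\delta=\alpha$. The extra verification that $(3)\Rightarrow(2)$ is harmless but not needed once the cycle is closed.
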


\begin{proof}
Let $f$ be an endomorphism of $K[x,y]$ that satisfies
$\Jac(f(x),f(y))\in K^*$. 

$(1)\Longrightarrow(2)$: $f$ is invertible. 
Define $\gamma:= f^{-1} \alpha f$ and $\delta:= \alpha$, 
and get 
$f \gamma= f(f^{-1} \alpha f)= \alpha f= \delta f$.

$(2)\Longrightarrow(3)$: There exist involutions $\gamma$ and $\delta$ on $K[x,y]$ 
such that $f$ is a $\gamma,\delta$-endomorphism, 
namely $f \gamma= \delta f$.
{}From Lemma \ref{conjugate}, there exists an automorphism $g$ of $K[x,y]$ 
such that $\gamma= g^{-1} \alpha g$,
and there exists an automorphism $h$ of $K[x,y]$ such that
$\delta= h^{-1} \alpha h$.
 
Then $f \gamma= \delta f$ becomes
$(h f g^{-1}) \alpha = \alpha (h f g^{-1})$,
so $h f g^{-1}$ is an $\alpha$-endomorphism.

$(3)\Longrightarrow(1)$: There exist automorphisms $g$ and $h$ of $K[x,y]$ 
such that $hfg^{-1}$ is an $\alpha$-endomorphism.
Clearly, $\Jac((hfg^{-1})(x), (hfg^{-1})(y)) \in K^*$.

Apply ~\cite[Proposition 4.1]{mos val} to $hfg^{-1}$ and get that 
$hfg^{-1}$ is invertible, hence $f$ is invertible.
\end{proof}

The following Lemma is an analogue of ~\cite[Lemma 2.5]{vered beta gamma}; 
here we add the condition that $\Jac(f(x),f(y))\in K^*$, 
because we do not know how to prove that a 
$\gamma,\delta$-endomorphism is invertible without the condition on the Jacobian.

\begin{lemma}\label{lemma}
Assume $f$ is an endomorphism of $K[x,y]$ that satisfies
 
$\Jac(f(x),f(y))\in K^*$. 
Then: $f$ is a $\gamma,\delta$-endomorphism, 
where $\gamma$ and $\delta$ are involutions on $K[x,y]$ 
$\Longleftrightarrow$ $f$ is invertible.
\end{lemma}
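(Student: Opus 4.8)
The plan is to prove the two directions separately, and to observe that all the work has essentially already been done by Theorem \ref{starred true generalized} and Theorem \ref{TFAE}.

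For the direction ($\Rightarrow$): suppose $f$ is a $\gamma,\delta$-endomorphism for some involutions $\gamma,\delta$ on $K[x,y]$, and recall we are assuming $\Jac(f(x),f(y))\in K^*$. Then $f$ satisfies exactly the hypotheses of Theorem \ref{starred true generalized}, so $f$ is invertible. This direction is immediate and requires no new argument.

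For the direction ($\Leftarrow$): suppose $f$ is invertible. Then I would exhibit explicit involutions $\gamma,\delta$ witnessing that $f$ is a $\gamma,\delta$-endomorphism. Following the $(1)\Rightarrow(2)$ step in the proof of Theorem \ref{TFAE}, set $\delta:=\alpha$ and $\gamma:=f^{-1}\alpha f$. Since $f$ is an automorphism and $\alpha$ is an involution, $\gamma$ is a well-defined automorphism of $K[x,y]$ with $\gamma^2 = f^{-1}\alpha f f^{-1}\alpha f = f^{-1}\alpha^2 f = \ide$, so $\gamma$ is an involution. Then $f\gamma = f f^{-1}\alpha f = \alpha f = \delta f$, so $f$ is a $\gamma,\delta$-endomorphism, as required.

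There is essentially no obstacle here: the content of the lemma is entirely carried by Theorem \ref{starred true generalized} (for $\Rightarrow$) and by the elementary conjugation trick already used in Theorem \ref{TFAE} (for $\Leftarrow$). The only point worth stating carefully is that in the ($\Leftarrow$) direction the hypothesis $\Jac(f(x),f(y))\in K^*$ is automatic, since an invertible polynomial endomorphism has Jacobian in $K^*$ by the chain rule; thus the lemma is genuinely an equivalence under the standing assumption on the Jacobian. One could also remark that, just as in Theorem \ref{TFAE}, the statement equivalently characterizes invertibility of $f$ by the existence of automorphisms $g,h$ with $hfg^{-1}$ an $\alpha$-endomorphism, via Lemma \ref{conjugate}.
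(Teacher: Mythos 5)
Your proof is correct and follows essentially the same route as the paper: the forward direction is Theorem \ref{starred true generalized} (which the paper simply re-expands via Lemma \ref{conjugate} and the application of the cited Proposition 4.1 to $hfg^{-1}$), and the backward direction uses the identical choice $\gamma:=f^{-1}\alpha f$, $\delta:=\alpha$. No issues.
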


The proof of this lemma actually appers in the above proof of Theorem \ref{TFAE}.

\begin{proof}
$\Longrightarrow$: There exist involutions $\gamma$ and $\delta$ on $K[x,y]$ 
such that $f \gamma= \delta f$.
 
$\gamma= g^{-1} \alpha g$ and $\delta= h^{-1} \alpha h$,
for some automorphisms $g$ and $h$ of $K[x,y]$.
 
Then $h f g^{-1}$ is an $\alpha$-endomorphism
that satisfies $\Jac((hfg^{-1})(x), (hfg^{-1})(y)) \in K^*$
(here we use the assumption $\Jac(f(x),f(y)) \in K^*$).

By ~\cite[Proposition 4.1]{mos val} $hfg^{-1}$ is invertible,
hence $f$ is invertible.

$\Longleftarrow$: Take $\gamma:= f^{-1} \alpha f$ and $\delta:= \alpha$, 
and get 
$f \gamma= f(f^{-1} \alpha f)= \alpha f= \delta f$.
\end{proof}

\section{Extension and restriction conditions}
In view of Theorem \ref{TFAE}, our (hopefully possible) mission is to prove that the 
$\gamma,\delta$ conjecture is true or to prove that the $g,h$ conjecture is true.

If each endomorphism $f$ of $K[x,y]$ that satisfies
$\Jac(f(x),f(y))\in K^*$ also satisfies the extension condition, 
then the $\gamma,\delta$ conjecture is true, see Theorem \ref{ext thm}.

If each endomorphism $f$ of $K[x,y]$ that satisfies
$\Jac(f(x),f(y))\in K^*$ also satisfies the restriction condition, 
then the $\gamma,\delta$ conjecture is true, see Theorem \ref{res thm}.

{}From now on we use the following notations:
$K$ will continue to denote a characteristic zero field, 
except in some results where we demand it to be the field of complex numbers.

Given an endomorphism $f$ of $K[x,y]$
(not necessarily satisfying $\Jac(f(x),f(y)) \in K^*$)
we denote $P:= f(x)$ and $Q:= f(y)$.
Denote by $T$ the image of $K[x,y]$ under $f$, namely $T= K[P,Q]$.
$T$ is a subalgebra of $K[x,y]$.
If $\Jac(P,Q) \in K^*$, then $T$ is isomorphic to $K[x,y]$.
Assume $\Jac(P,Q) \in K^*$ and denote by $\sigma_0$ the involution on $T$ 
which exchanges $P$ and $Q$, 
namely $\sigma_0(P)= Q$, $\sigma_0(Q)= P$ 
(extended in the obvious way to all of $T$).

\begin{remark}
Let $f$ be an endomorphism of $K[x,y]$ that satisfies
$\Jac(P,Q)\in K^*$. 
We do not know if $\sigma_0$ can be extended to an endomorphism of $K[x,y]$.
\end{remark}

\begin{definition}[The extension condition]
Let $f$ be an endomorphism of $K[x,y]$ that satisfies
$\Jac(P,Q)\in K^*$. 
We say that $f$ satisfies the extension condition if the involution 
$\sigma_0$ on $T$ can be extended to an involution on $K[x,y]$.
\end{definition}

Notice that in the above definition we demand that $\sigma_0$ can be extended 
not just to an endomorphism of $K[x,y]$,
but to an involution on $K[x,y]$ (an automorphism of $K[x,y]$ of order $2$).

\begin{theorem}[The extension theorem]\label{ext thm}
Assume $f$ is an endomorphism of $K[x,y]$ that satisfies
$\Jac(P,Q)\in K^*$. 
Then: $f$ satisfies the extension condition $\Longleftrightarrow$
$f$ is a $\gamma,\delta$-endomorphism of $K[x,y]$, 
where $\gamma$ and $\delta$ are involutions on $K[x,y]$.
\end{theorem}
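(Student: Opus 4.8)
The plan is to prove both implications by translating between the involution $\sigma_0$ on the subalgebra $T = K[P,Q]$ and a $\gamma,\delta$-endomorphism structure on all of $K[x,y]$, using the isomorphism $f\colon K[x,y]\xrightarrow{\ \sim\ }T$ (which is an isomorphism precisely because $\Jac(P,Q)\in K^*$ forces $f$ to be injective, and $T$ is its image by definition).

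For the direction ``$\Longrightarrow$'': suppose $\sigma_0$ extends to an involution $\delta$ on $K[x,y]$, so $\delta|_T=\sigma_0$ and $\delta^2=\ide$. Since $f$ maps $K[x,y]$ isomorphically onto $T$, the map $\gamma := f^{-1}\sigma_0 f = f^{-1}\delta f$ is a well-defined automorphism of $K[x,y]$, and it is an involution because $\gamma^2 = f^{-1}\delta^2 f = \ide$. It remains to check $f\gamma = \delta f$, which is immediate: $f\gamma = f f^{-1}\delta f = \delta f$ (here I use that $\delta f$ has image in $T$, so composing with $f f^{-1}$ on that image is the identity). Hence $f$ is a $\gamma,\delta$-endomorphism with $\gamma,\delta$ involutions on $K[x,y]$.

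For the direction ``$\Longleftarrow$'': suppose $f\gamma = \delta f$ for involutions $\gamma,\delta$ on $K[x,y]$. I must show $\delta$ restricts to $\sigma_0$ on $T$, i.e. that $\delta(P)=Q$ and $\delta(Q)=P$. Here I would first invoke Lemma~\ref{lemma} (or equivalently Theorem~\ref{starred true generalized}): under the hypotheses $f$ is invertible, hence an automorphism of $K[x,y]$, so $T=K[x,y]$ and $\sigma_0$ is already the involution exchanging $P=f(x)$ and $Q=f(y)$ on all of $K[x,y]$. Now from $f\gamma=\delta f$ and $\gamma^2=\ide$ we get $\delta = f\gamma f^{-1}$; applying this to $P=f(x)$ gives $\delta(P) = f\gamma f^{-1}(f(x)) = f(\gamma(x))$, and similarly $\delta(Q)=f(\gamma(y))$. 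So to conclude $\delta|_T=\sigma_0$ I need $f(\gamma(x))=Q=f(y)$ and $f(\gamma(y))=P=f(x)$, equivalently (by injectivity of $f$) $\gamma(x)=y$ and $\gamma(y)=x$, i.e. $\gamma=\alpha$.

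The main obstacle is precisely this last point: an arbitrary involution $\gamma$ with $f\gamma=\delta f$ need not be the exchange involution $\alpha$, so the naive argument above is not quite enough. The fix is to observe that the extension condition as stated only requires the \emph{existence} of \emph{some} involution on $K[x,y]$ restricting to $\sigma_0$, and to produce it directly from the data: since $f$ is invertible (by Lemma~\ref{lemma}), define $\delta_0 := f\alpha f^{-1}$; this is an involution on $K[x,y]$, and by the computation above $\delta_0(P)=f(\alpha(x))=f(y)=Q$ and $\delta_0(Q)=f(\alpha(y))=f(x)=P$, so $\delta_0$ extends $\sigma_0$ and $f$ satisfies the extension condition. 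Thus in the ``$\Longleftarrow$'' direction the given $\gamma,\delta$-structure is used only through Lemma~\ref{lemma} to get invertibility of $f$, after which the required involution is constructed by conjugating $\alpha$; one should be careful to present it this way rather than trying to force $\delta$ itself to be the extension.
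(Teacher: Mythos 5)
Your proposal is correct and essentially matches the paper's proof: in the forward direction your $\gamma := f^{-1}\sigma_0 f$ is exactly the exchange involution $\alpha$ (since it sends $x\mapsto f^{-1}(Q)=y$ and $y\mapsto x$), which is the pair $(\alpha,\sigma)$ the paper exhibits directly, and in the backward direction the paper likewise invokes Lemma~\ref{lemma} to get invertibility, concludes $T=K[x,y]$, and observes that $\sigma_0$ (your $f\alpha f^{-1}$) is then itself the required involution on $K[x,y]$. Your explicit warning that one cannot force the given $\delta$ to restrict to $\sigma_0$ is a sound observation, but the resolution you give is the same as the paper's.
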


\begin{proof}
$\Longrightarrow$: $f$ satisfies the extension condition, 
so the involution $\sigma_0$ on $T$ can be extended to an involution on $K[x,y]$, 
call it $\sigma$.
Therefore, we get that $f$ is an $\alpha,\sigma$-endomorphism of $K[x,y]$, 
because:
$$
(f \alpha)(x)= f(y)= Q= \sigma_0(P)= \sigma(P)= \sigma(f(x))= (\sigma f)(x) 
$$
and
$$
(f \alpha)(y)= f(x)= P= \sigma_0(Q)= \sigma(Q)= \sigma(f(y))= (\sigma f)(y).
$$
$\Longleftarrow$: $f$ is a $\gamma,\delta$-endomorphism of $K[x,y]$, 
where $\gamma$ and $\delta$ are involutions on $K[x,y]$.

Lemma \ref{lemma} implies that $f$ is invertible,
so $T= K[x,y]$.

By definition, $\sigma_0$ is the involution on $T$ given by
$\sigma_0(P)= Q$, $\sigma_0(Q)= P$,
hence $\sigma_0$ is an involution on $K[x,y]$,
so $f$ satisfies the extension condition (the extension of $\sigma_0$ to $K[x,y]$ is 
$\sigma_0$ itself).

(Remark: $T= K[x,y]$ so 
$x=\sum a_{ij}P^iQ^j$ and $y=\sum b_{ij}P^iQ^j$.
Therefore, 
$$
\sigma_0(x)=\sigma_0(\sum a_{ij}P^iQ^j)= \sum a_{ij} (\sigma_0(P))^i (\sigma_0(Q))^j=
\sum a_{ij} Q^i P^j.
$$
And similarly, $\sigma_0(y)= \sum b_{ij} Q^i P^j$).
\end{proof}

{}From Lemma \ref{lemma} and the above proof it is clear that
if $f$ is an endomorphism of $K[x,y]$ that satisfies
$\Jac(P,Q)\in K^*$, then: 
$f$ satisfies the extension condition $\Longleftrightarrow$
$f$ is invertible.
We also suggest to consider the following restriction condition. 

\begin{definition}[The restriction condition]
Let $f$ be an endomorphism of $K[x,y]$ that satisfies
$\Jac(P,Q)\in K^*$. 
We say that $f$ satisfies the restriction condition if 
$\alpha(P) \in T$ and $\alpha(Q) \in T$.
Equivalently, we say that $f$ satisfies the restriction condition if the exchange involution 
$\alpha$ on $K[x,y]$ when restricted to $T$ is an involution on $T$. 
\end{definition}

\begin{remark}
Let $f$ be an endomorphism of $K[x,y]$ that satisfies
$\Jac(P,Q)\in K^*$. 
We do not know if necessarily $\alpha(P) \in T$ 
and $\alpha(Q) \in T$.
\end{remark}

\begin{theorem}[The restriction theorem]\label{res thm}
Assume $f$ is an endomorphism of $K[x,y]$ that satisfies
$\Jac(P,Q)\in K^*$. 
Then: $f$ satisfies the restriction condition 
$\Longleftrightarrow$ $f$ is invertible.
\end{theorem}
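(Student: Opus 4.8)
The plan is to produce, from the restriction condition, two involutions $\gamma,\delta$ on $K[x,y]$ making $f$ a $\gamma,\delta$-endomorphism, and then to invoke Lemma~\ref{lemma}. The direction $\Longleftarrow$ is immediate: if $f$ is invertible then $T=f(K[x,y])=K[x,y]$, so in particular $\alpha(P),\alpha(Q)\in K[x,y]=T$, which is the restriction condition. So the content is in the direction $\Longrightarrow$.

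So assume $f$ satisfies the restriction condition. Since $\Jac(P,Q)\in K^*$, the elements $P,Q$ are algebraically independent, so $f$ induces an isomorphism of $K$-algebras $f\colon K[x,y]\to T$ with inverse $f^{-1}\colon T\to K[x,y]$. The restriction condition says $\alpha(P),\alpha(Q)\in T$, i.e.\ $\alpha(T)\subseteq T$; applying the automorphism $\alpha$ once more gives $T\subseteq\alpha(T)$, hence $\alpha$ restricts to an involution $\alpha|_T$ of $T$. I would then set
\[
\gamma:=f^{-1}\circ\alpha|_T\circ f ,
\]
which is a well-defined endomorphism of $K[x,y]$ precisely because $\alpha f$ takes values in $T$; concretely $\gamma(x)=f^{-1}(\alpha(P))$ and $\gamma(y)=f^{-1}(\alpha(Q))$. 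A short operator computation using $f\circ f^{-1}=\ide_T$ and $(\alpha|_T)^2=\ide_T$ shows $\gamma^2=\ide$, while $\gamma\neq\ide$ (otherwise $\alpha|_T=f\circ\gamma\circ f^{-1}=\ide_T$, which would force $P,Q$ into the $\alpha$-fixed ring $K[x+y,xy]$, incompatible with $\Jac(P,Q)\in K^*$). Thus $\gamma$ is an involution on $K[x,y]$, and the same computation gives $f\gamma=\alpha f$. Taking $\delta:=\alpha$, we conclude that $f$ is a $\gamma,\delta$-endomorphism of $K[x,y]$ with $\gamma,\delta$ involutions.

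Finally, since $\Jac(P,Q)\in K^*$, Lemma~\ref{lemma} applies to $f$ and yields that $f$ is invertible, which finishes the proof. The one genuinely delicate point — the ``main obstacle'', such as it is — is the well-definedness of $\gamma=f^{-1}\alpha f$ as an endomorphism of all of $K[x,y]$: this is exactly where, and the only place where, the restriction condition is used, since without $\alpha(P),\alpha(Q)\in T$ the symbol $f^{-1}(\alpha(P))$ has no meaning. Everything else is routine bookkeeping, along the lines of the proofs of Lemma~\ref{lemma} and Theorem~\ref{ext thm}.
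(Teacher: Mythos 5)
Your proof is correct, and it takes a route that differs in an interesting way from the paper's. The paper works on the target side: it names the restriction $\alpha_0$ of $\alpha$ to $T$, invokes Lemma~\ref{conjugate} \emph{inside} $T$ (using $T\cong K[x,y]$) to find an automorphism $g_0$ of $T$ with $\sigma_0=g_0^{-1}\alpha_0 g_0$, deduces that $g_0f$ is an $\alpha$-endomorphism of $K[x,y]$ with scalar Jacobian, applies \cite[Proposition 4.1]{mos val} to $g_0f$, and then extracts $T=K[x,y]$ from the surjectivity of $g_0f$. You instead work on the source side: you transport $\alpha|_T$ back through the isomorphism $f\colon K[x,y]\to T$ to get the involution $\gamma=f^{-1}\alpha|_T f$ on $K[x,y]$, observe $f\gamma=\alpha f$, and hand everything to Lemma~\ref{lemma} with $\delta=\alpha$. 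The two arguments rest on the same pillars (the restriction condition forces $\alpha(T)=T$; everything reduces to \cite[Proposition 4.1]{mos val} via a conjugation supplied by Lemma~\ref{conjugate}), but yours packages the conjugation into the already-proved Lemma~\ref{lemma} rather than redoing it on $T$, which makes the proof shorter and avoids the separate step of recovering $x,y\in T$ from the surjectivity of $g_0f$. Two details you handle that the paper glosses over, and which are genuinely worth checking: that $\alpha(T)\subseteq T$ upgrades to $\alpha(T)=T$ (by applying $\alpha$ again), and that $\gamma\neq\ide$ --- needed because the paper defines an involution as an automorphism of order exactly $2$, and Lemma~\ref{conjugate} would fail for $\gamma=\ide$; your argument that $\alpha|_T=\ide_T$ would put $P,Q$ in the $\alpha$-fixed subring and contradict $\Jac(P,Q)\in K^*$ (e.g.\ via Lemma~\ref{lemma a b}(1), since a nonzero scalar is not skew-symmetric) is sound.
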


\begin{proof}
$\Longrightarrow$: $f$ satisfies the restriction condition, 
so $\alpha$ restricted to $T$ is an involution on $T$.

Denote the restriction of $\alpha$ to $T$ by $\alpha_0$.

Since $T$ is isomorphic to $K[x,y]$ it follows from Lemma \ref{conjugate} that every 
involution on $T$ is conjugate (by an automorphism of $T$) to one chosen involution on $T$
(equivalently, any two involutions are conjugate);

in particular, there exists an automorphism $g_0$ of $T$ such that
$\sigma_0= g_0^{-1} \alpha_0 g_0$.
  
Therefore, 
$$
(f \alpha)(x)= f(y)= Q= \sigma_0(P)= \sigma_0(f(x))= (g_0^{-1} \alpha_0 g_0)(f(x))= 
(g_0^{-1} \alpha_0 g_0 f)(x)
$$
and
$$
(f \alpha)(y)= f(x)= P= \sigma_0(Q)= \sigma_0(f(y))= (g_0^{-1} \alpha_0 g_0)(f(y))= 
(g_0^{-1} \alpha_0 g_0 f)(y).
$$
Therefore, $f \alpha= (g_0)^{-1} \alpha_0 g_0 f$.

Then, $g_0 f \alpha= \alpha_0 g_0 f$,
so $g_0 f \alpha= \alpha g_0 f$,
namely $g_0 f$ is an $\alpha$-endomorphism of $K[x,y]$.

Since the Jacobian of $g_0(P),g_0(Q)$ with respect to $P,Q$,
denote it by $a$, is a non-zero scalar ($g_0$ is an automorphism of $T$)
and the Jacobian of $f(x),f(y)$ with respect to $x,y$,
denote it by $b$, is a non-zero scalar (by assumption 
$\Jac(f(x),f(y)) \in K^*$),
we get that
$\Jac((g_0 f)(x), (g_0 f)(y)) = ab \in K^*$.
($\Jac((g_0 f)(x), (g_0 f)(y))$ is the Jacobian of 
$(g_0 f)(x), (g_0 f)(y)$ with respect to $x,y$).

By ~\cite[Proposition 4.1]{mos val} $g_0 f$ is an automorphism of $K[x,y]$,
so $K[g_0(P),g_0(Q)]= K[x,y]$.

Hence,
$x= \sum a_{ij} (g_0(P))^i (g_0(Q))^j= g_0 (\sum a_{ij} P^iQ^j)$
and

$y= \sum b_{ij} (g_0(P))^i (g_0(Q))^j= g_0 (\sum b_{ij} P^iQ^j)$,
where $a_{ij}, b_{ij} \in K$.

This shows that $x,y \in T$, so $T =K[x,y]$, and we are done.

$\Longleftarrow$: $f$ is invertible,
so $T= K[x,y]$.
Hence $f$ satisfies the restriction condition, since trivially
$\alpha(P) \in K[x,y]= T$ and $\alpha(Q) \in K[x,y]= T$.
\end{proof}

One can generalize both the extension condition and the restriction condition to the following conditions 
and have results similar to Theorem \ref{ext thm} and Theorem \ref{res thm}.

More precisely:
Let $\epsilon_0$ be an involution on $T$.

\begin{definition}[The $\epsilon_0$ extension condition]
Let $f$ be an endomorphism of $K[x,y]$ that satisfies
$\Jac(P,Q)\in K^*$. 
We say that $f$ satisfies the $\epsilon_0$ extension condition if the involution 
$\epsilon_0$ on $T$ can be extended to an involution on $K[x,y]$.
\end{definition}

According to this definition, our previous extension condition is just the $\sigma_0$ extension condition.

\begin{theorem}[The $\epsilon_0$ extension theorem]\label{generalized ext thm}
Assume $f$ is an endomorphism of $K[x,y]$ that satisfies
$\Jac(P,Q)\in K^*$. 
Then: $f$ satisfies the $\epsilon_0$ extension condition 
$\Longleftrightarrow$ $f$ is a $\gamma,\delta$-endomorphism of $K[x,y]$, 
where $\gamma$ and $\delta$ are involutions on $K[x,y]$
$\Longleftrightarrow$ $f$ is invertible.
\end{theorem}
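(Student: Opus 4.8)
The plan is to derive the double equivalence from Lemma~\ref{lemma} together with two short implications. Since $\Jac(P,Q)\in K^*$, Lemma~\ref{lemma} already tells us that $f$ is a $\gamma,\delta$-endomorphism of $K[x,y]$, for some involutions $\gamma,\delta$ on $K[x,y]$, if and only if $f$ is invertible. Hence it suffices to prove: \emph{(i)} if $f$ satisfies the $\epsilon_0$ extension condition, then $f$ is a $\gamma,\delta$-endomorphism of $K[x,y]$ for suitable involutions $\gamma,\delta$; and \emph{(ii)} if $f$ is invertible, then $f$ satisfies the $\epsilon_0$ extension condition.

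For \emph{(ii)} (the easy half): if $f$ is invertible then $T=K[P,Q]=K[x,y]$, so $\epsilon_0$ is itself an involution on $K[x,y]$ and is extended by itself; therefore the $\epsilon_0$ extension condition holds.

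For \emph{(i)}, the key observation is that, because $\Jac(P,Q)\in K^*$, the endomorphism $f$ defines an isomorphism $K[x,y]\to T$ (as noted in the excerpt, $T$ is then isomorphic to $K[x,y]$), so we have at our disposal the inverse map $f^{-1}\colon T\to K[x,y]$. Let $\epsilon$ be an involution on $K[x,y]$ whose restriction to $T$ equals $\epsilon_0$; such $\epsilon$ exists precisely by the $\epsilon_0$ extension condition. Put $\gamma:=f^{-1}\epsilon_0 f$ and $\delta:=\epsilon$. Then $\gamma$ is a composition of $K$-algebra isomorphisms, hence an automorphism of $K[x,y]$, and $\gamma^2=f^{-1}\epsilon_0^2 f=\ide$, so $\gamma$ is an involution on $K[x,y]$; while $\delta$ is an involution on $K[x,y]$ by choice. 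Finally, for every $w\in K[x,y]$ we have $f(w)\in T$, and since $\epsilon$ restricts to $\epsilon_0$ on $T$ we get $(f\gamma)(w)=\epsilon_0\big(f(w)\big)=\epsilon\big(f(w)\big)=(\delta f)(w)$. Thus $f\gamma=\delta f$, i.e.\ $f$ is a $\gamma,\delta$-endomorphism of $K[x,y]$, completing \emph{(i)}.

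I do not anticipate a genuine obstacle here; the only point that requires care is keeping track of domains — the conjugate $f^{-1}\epsilon_0 f$ must be formed with $f$ viewed as the isomorphism onto its image $T$, the involution $\epsilon_0$ lives only on $T$, and the identity $f\gamma=\delta f$ on all of $K[x,y]$ works exactly because $f$ takes its values in $T$, where $\delta=\epsilon$ agrees with $\epsilon_0$.
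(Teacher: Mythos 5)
Your proposal is correct and follows essentially the route the paper intends: the paper omits an explicit proof of Theorem \ref{generalized ext thm}, stating only that it is proved like Theorems \ref{ext thm} and \ref{res thm}, and your argument is exactly that adaptation — the invertible-implies-extension direction and the use of Lemma \ref{lemma} are the paper's, while your choice $\gamma:=f^{-1}\epsilon_0 f$ (with $f$ viewed as an isomorphism onto $T$) and $\delta:=\epsilon$ is the natural generalization of the paper's $\alpha$ and $\sigma$ from the $\sigma_0$ case. No gaps.
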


Let $\epsilon$ be an involution on $K[x,y]$.

\begin{definition}[The $\epsilon$ restriction condition]
Let $f$ be an endomorphism of $K[x,y]$ that satisfies
$\Jac(P,Q)\in K^*$. 
We say that $f$ satisfies the $\epsilon$ restriction condition 
if $\epsilon(P) \in T$ and $\epsilon(Q) \in T$.
Equivalently, we say that $f$ satisfies the $\epsilon$ restriction condition if the involution 
$\epsilon$ on $K[x,y]$, when restricted to $T$ is an involution on $T$. 
\end{definition}

According to this definition, our previous restriction condition is just the 
$\alpha$ restriction condition.

\begin{theorem}[The $\epsilon$ restriction theorem]\label{generalized res thm}
Assume $f$ is an endomorphism of $K[x,y]$ that satisfies
$\Jac(P,Q)\in K^*$. 
Then: $f$ satisfies the $\epsilon$ restriction condition 
$\Longleftrightarrow$ $f$ is invertible.
\end{theorem}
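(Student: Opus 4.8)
The plan is to mirror the proof of Theorem~\ref{res thm} almost verbatim, replacing the exchange involution $\alpha$ with the arbitrary involution $\epsilon$ on $K[x,y]$. The key observation making this work is Lemma~\ref{conjugate}, which tells us that $\epsilon$ is conjugate to $\alpha$: there is an automorphism $h$ of $K[x,y]$ with $\epsilon = h^{-1}\alpha h$.

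For the direction $\Longrightarrow$: suppose $f$ satisfies the $\epsilon$ restriction condition, so $\epsilon_0 := \epsilon|_T$ is an involution on $T$. Since $T \cong K[x,y]$ via $f$, every involution on $T$ is conjugate to any chosen one (the analogue of Lemma~\ref{conjugate} for $T$), so there is an automorphism $g_0$ of $T$ with $\sigma_0 = g_0^{-1}\epsilon_0 g_0$. Exactly as in the proof of Theorem~\ref{res thm}, one checks on the generators $x,y$ that
\[
f\alpha = g_0^{-1}\epsilon_0 g_0 f,
\]
using $f\alpha(x) = Q = \sigma_0(P) = \sigma_0(f(x))$ and similarly for $y$. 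Hence $g_0 f \alpha = \epsilon_0 g_0 f = \epsilon g_0 f$ (the last equality because $\epsilon_0$ is the restriction of $\epsilon$ and $g_0 f$ has image in $T$), so $g_0 f$ is an $\alpha,\epsilon$-endomorphism of $K[x,y]$. A Jacobian bookkeeping step (the Jacobian of $g_0(P),g_0(Q)$ with respect to $P,Q$ is a nonzero scalar since $g_0$ is an automorphism of $T$, and $\Jac(f(x),f(y))\in K^*$ by hypothesis) gives $\Jac((g_0 f)(x),(g_0 f)(y))\in K^*$. Now apply Theorem~\ref{starred true generalized} (the $\gamma,\delta$-generalization of ~\cite[Proposition 4.1]{mos val}, with $\gamma=\alpha$ and $\delta=\epsilon$) to conclude $g_0 f$ is an automorphism of $K[x,y]$; therefore $K[g_0(P),g_0(Q)] = K[x,y]$, which forces $x,y \in T$, i.e.\ $T = K[x,y]$ and $f$ is invertible.

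For the direction $\Longleftarrow$: if $f$ is invertible then $T = K[x,y]$, so trivially $\epsilon(P)\in T$ and $\epsilon(Q)\in T$, and $f$ satisfies the $\epsilon$ restriction condition.

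I expect the only genuine subtlety—the rest being routine bookkeeping already carried out in Theorem~\ref{res thm}—to be the step asserting that the restriction $\epsilon_0$ of $\epsilon$ to $T$ is conjugate to $\sigma_0$ by an automorphism of $T$; this uses that $T$ is isomorphic to $K[x,y]$ together with Lemma~\ref{conjugate} transported across that isomorphism, and one should remark that an involution on $T$ restricted from $K[x,y]$ is indeed an involution on $T$ precisely by the definition of the $\epsilon$ restriction condition. Everything else is the same diagram chase as before, with $\alpha$ in the target slot replaced by $\epsilon$ and Theorem~\ref{starred true generalized} used in place of the bare ~\cite[Proposition 4.1]{mos val}.
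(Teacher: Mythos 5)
Your proposal is correct and is essentially the proof the paper intends: the paper only states that the $\epsilon$ restriction theorem follows by an argument ``similar to Theorem~\ref{res thm}'', and your write-up carries out exactly that substitution of $\epsilon$ for $\alpha$. The one adjustment you make — invoking Theorem~\ref{starred true generalized} for the $\alpha,\epsilon$-endomorphism $g_0 f$ rather than the bare ~\cite[Proposition 4.1]{mos val} — is precisely the adaptation needed, so there is nothing to add.
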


Summarizing, if $f$ is an endomorphism of $K[x,y]$ that satisfies
$\Jac(P,Q)\in K^*$, then TFAE: \begin{itemize}
\item $f$ satisfies the $\epsilon_0$ extension condition.
\item $f$ satisfies the $\epsilon$ restriction condition. 
\item $f$ is invertible.
\end{itemize}

\section{More results}
Recall the following result of Cheng-Mckay-Wang ~\cite[Theorem 1]{wang}:
``Let $K$ be the field of complex numbers. Assume $A,B \in K[x,y]$ satisfy
$\Jac(A,B) \in K^*$. 
If $R \in K[x,y]$ satisfies $\Jac(A,R)= 0$, then $R \in K[A]$".

Its analogue result in the first Weyl algebra over any characteristic zero field, 
not necessarily the field of complex numbers, can be found in ~\cite[Theorem 2.11]{ggv};
instead of the Jacobian take the commutator.

We shall use ~\cite[Theorem 1]{wang} in the proofs of Theorem \ref{generalized alpha is invertible}, 
Theorem \ref{generalized epsilon is invertible}, Theorem \ref{P symmetric} and 
Theorem \ref{generalized P symmetric}.

Therefore, in Theorem \ref{generalized alpha is invertible}, Theorem \ref{generalized epsilon is invertible}, 
Theorem \ref{P symmetric} and Theorem \ref{generalized P symmetric} we will demand that 
$K$ will be the field of complex numbers.

Actually, in the proofs of those theorems we use the $\epsilon$ restriction theorem 
\ref{generalized res thm}.

(In Theorem \ref{s k}, $K$ is any characteristic zero field.
In its proof we are not using the $\epsilon$ restriction theorem 
\ref{generalized res thm}).

\subsection{A generalized $\alpha$-endomorphism}
Of course, a $\gamma,\delta$-endomorphism is a generalization of an 
$\alpha$-endomorphism. 
Notice that in the definition of a $\gamma,\delta$-endomorphism $f$ 
(and in the definition of an $\alpha$-endomorphism $f$)
there is no special assumption on $\Jac(P,Q)$,
although in our results we add the assumption $\Jac(P,Q) \in K^*$. 

Now we suggest a generalization of an $\alpha$-endomorphism $f$ of $K[x,y]$
that satisfies $\Jac(P,Q) \in K^*$:

\begin{definition}[A generalized $\alpha$-endomorphism]
Let $f$ be an endomorphism of $K[x,y]$.
We say that $f$ is a generalized $\alpha$-endomorphism if the following two conditions are satisfied:
\begin{itemize}
\item [(1)] $\Jac(P,Q) \in K^*$.
\item [(2)] $\Jac(P,\alpha(P)) \in K^*$ or $\Jac(Q,\alpha(Q)) \in K^*$.
\end{itemize}
\end{definition}

Notice that a generalized $\alpha$-endomorphism $f$ is indeed a generalization of an 
$\alpha$-endomorphism $f$ of $K[x,y]$ 
that satisfies $\Jac(P,Q) \in K^*$:
Let $f$ be an $\alpha$-endomorphism of $K[x,y]$ that satisfies 
$\Jac(P,Q) \in K^*$.
Of course, 
$$
\alpha(P)= \alpha (f(x))= (\alpha f)(x)= (f \alpha)(x)= f(y)= Q.
$$
Hence, $\Jac(P,\alpha(P))= \Jac(P,Q) \in K^*$, as desired
(Actually also $\Jac(Q,\alpha(Q))= \Jac(Q,P) \in K^*$).

\begin{theorem}\label{generalized alpha is invertible}
Assume $K$ is the field of complex numbers.
If $f$ is a generalized $\alpha$-endomorphism, then $f$ is invertible.
\end{theorem}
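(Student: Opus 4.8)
The plan is to reduce a generalized $\alpha$-endomorphism to the situation of the $\epsilon$ restriction theorem (Theorem \ref{generalized res thm}) applied to $\epsilon = \alpha$, i.e.\ to show that a generalized $\alpha$-endomorphism satisfies the $\alpha$ restriction condition, namely $\alpha(P) \in T$ and $\alpha(Q) \in T$ where $T = K[P,Q]$. Once this is established, Theorem \ref{generalized res thm} immediately gives that $f$ is invertible, so the whole weight of the argument lies in deriving the membership $\alpha(P),\alpha(Q) \in T$ from condition (2).

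By symmetry (interchanging the roles of $x$ and $y$, equivalently replacing $P,Q$ by $\alpha$'s action on indices), it suffices to treat the case $\Jac(P,\alpha(P)) \in K^*$; the case $\Jac(Q,\alpha(Q)) \in K^*$ is handled identically. So assume $\Jac(P,\alpha(P)) \in K^*$. The idea is to apply the Cheng--McKay--Wang result \cite[Theorem 1]{wang}: with $A := P$ and $B := \alpha(P)$ we have $\Jac(A,B) \in K^*$, so any $R \in K[x,y]$ with $\Jac(A,R) = 0$ lies in $K[A] = K[P]$. I would like to apply this with $R := \alpha(Q)$. The key computation is to check that $\Jac(P, \alpha(Q)) = 0$: applying the involution $\alpha$ to $\Jac(P,Q) \in K^*$ and using that $\alpha$ is an algebra automorphism together with the standard transformation rule for the Jacobian under a change of variables (here $\alpha$ simply swaps $x$ and $y$, so it multiplies the Jacobian by $-1$), one gets $\Jac(\alpha(P),\alpha(Q)) \in K^*$; but I actually want a \emph{vanishing} Jacobian, so instead I would compute $\Jac(P,\alpha(Q))$ or $\Jac(\alpha(P), Q)$ directly. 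The point is that $\Jac(P, \alpha(P)) \in K^*$ means $P$ and $\alpha(P)$ are ``coordinates'' in the sense that $K[x,y]$ over $K[P]$ behaves correctly, so any element whose Jacobian against $P$ vanishes must be a polynomial in $P$; applying this to the relation $\Jac(P, Q) \in K^*$ and to $Q$ itself, and chasing the involution, should force $\alpha(Q) \in K[P] \subseteq T$ and then, symmetrically or by a second application, $\alpha(P) \in T$.

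More carefully: from $\Jac(P,\alpha(P)) \in K^*$ and \cite[Theorem 1]{wang} applied with the pair $(P,\alpha(P))$, every $R$ with $\Jac(P,R)=0$ satisfies $R \in K[P]$. Now I claim $\Jac(P,\alpha(Q)) = 0$: indeed, apply $\alpha$ to the identity expressing $\Jac(\alpha(P),\alpha(Q))$ in terms of $\Jac(P,Q)$ — since $\alpha$ is the linear swap of $x,y$, a direct chain-rule check gives $\Jac(\alpha(P),\alpha(Q)) = -\Jac(P,Q) \in K^*$; this does not directly give a zero Jacobian, so instead the right move is to use $\Jac(\alpha(P), \alpha(P)) = 0$ trivially and transport: apply $\alpha$ to get nothing new. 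The honest route is: since $\Jac(P,\alpha(P)) \in K^*$, the pair $\{P,\alpha(P)\}$ generates a polynomial subalgebra isomorphic to $K[x,y]$ via a tame change of coordinates, and in fact $K[P,\alpha(P)]$ has the same fraction field structure so that $Q$, which satisfies $\Jac(P,Q) \in K^*$, can be compared; then $\alpha(Q)$ satisfies $\Jac(\alpha(P),\alpha(Q)) \in K^*$, and combined with $\Jac(P,\alpha(P))\in K^*$ one writes $\alpha(Q)$ as a polynomial in $P,\alpha(P)$ — hence in $T$ provided $\alpha(P) \in T$, which I must also extract. I would extract $\alpha(P) \in T$ first by applying \cite[Theorem 1]{wang} with $A=P$, $B=\alpha(P)$ to $R = Q$: check $\Jac(P,Q)\in K^*$ forces nothing, so instead apply it with $A=P, B=Q$ (since $\Jac(P,Q)\in K^*$) to $R=\alpha(P)$ after verifying $\Jac(P,\alpha(P))$ — but that is nonzero, contradiction. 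This tension shows the actual argument must use condition (2) to produce a vanishing Jacobian by a cross term; the clean statement is $\Jac(P,\alpha(Q)) + \Jac(\alpha(P),Q) = \alpha$-image of $2\Jac(P,Q)$ up to sign, isolating one vanishing summand.

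The main obstacle, and the step I expect to require the most care, is exactly this bookkeeping: producing, from $\Jac(P,\alpha(P)) \in K^*$ alone, an element $R \in K[x,y]$ with $\Jac(P,R) = 0$ that is visibly related to $\alpha(Q)$ (or $\alpha(P)$), so that \cite[Theorem 1]{wang} yields membership in $K[P] \subseteq T$; once both $\alpha(P) \in T$ and $\alpha(Q) \in T$ are in hand the conclusion is immediate from Theorem \ref{generalized res thm}. The involution identities for the Jacobian (that $\alpha$ multiplies $\Jac$ by $-1$ and commutes with polynomial substitution) are routine but must be stated precisely, and the only genuinely nontrivial input is the Cheng--McKay--Wang theorem, which is why the hypothesis that $K$ is the field of complex numbers is imposed. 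I would therefore structure the proof as: (i) recall $\alpha(P),\alpha(Q)$ and the Jacobian transformation under $\alpha$; (ii) in the case $\Jac(P,\alpha(P))\in K^*$, apply \cite[Theorem 1]{wang} to deduce $\alpha(Q) \in K[P]$ and $\alpha(P)\in T$ (the latter via a parallel application or via $\alpha(P) = \alpha(f(x))$ being expressible once $\alpha(Q)\in T$ and $\Jac(\alpha(P),\alpha(Q))\in K^*$ are known), hence the $\alpha$ restriction condition holds; (iii) symmetric argument in the other case; (iv) invoke Theorem \ref{generalized res thm}.
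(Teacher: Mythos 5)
Your overall strategy is the paper's: establish $\alpha(P)\in T$ and $\alpha(Q)\in T$ and then invoke the restriction theorem. But the one step that carries all the weight --- producing from condition (2) an element with vanishing Jacobian against $P$ so that \cite[Theorem 1]{wang} can be applied --- is exactly the step you leave unresolved. Your first attempt ($R:=\alpha(Q)$ together with the claim $\Jac(P,\alpha(Q))=0$) is false in general; the identity you finally propose, that $\Jac(P,\alpha(Q))+\Jac(\alpha(P),Q)$ equals the $\alpha$-image of $2\Jac(P,Q)$ up to sign ``isolating one vanishing summand,'' is not a correct identity and would not isolate a vanishing summand even if it were; and in several places you put the wrong pair in the role of $(A,B)$ when invoking Cheng--McKay--Wang.

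The missing idea is elementary once seen: write $a=\Jac(P,Q)$ and $b=\Jac(P,\alpha(P))$, both in $K^*$ by hypothesis. Then by bilinearity
$$
\Jac\bigl(P,\ \alpha(P)/b - Q/a\bigr)= 1-1 = 0,
$$
so \cite[Theorem 1]{wang}, applied with $A=P$ and $B=Q$ (legitimate since $\Jac(P,Q)\in K^*$), gives $\alpha(P)/b - Q/a = H(P)$ for some $H(t)\in K[t]$, whence $\alpha(P)=bQ/a+bH(P)\in T$. For $\alpha(Q)$ one repeats the trick with the pair $(\alpha(P),\alpha(Q))$, whose Jacobian is $-a$ by the chain rule: $\Jac\bigl(\alpha(P),\ \alpha(Q)/(-a)-P/(-b)\bigr)=0$, so $\alpha(Q)/(-a)-P/(-b)=G(\alpha(P))$ and $\alpha(Q)=aP/b-aG(\alpha(P))\in T$, using that $\alpha(P)\in T$ was already established. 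Only then does the restriction theorem apply. As written, your proposal correctly identifies where the difficulty lies but does not overcome it, so it does not constitute a proof.
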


\begin{proof}
$f$ is a generalized $\alpha$-endomorphism, hence by definition:
$\Jac(P,Q) \in K^*$ and 
$\Jac(P,\alpha(P)) \in K^*$ or $\Jac(Q,\alpha(Q)) \in K^*$. 

Assume w.l.o.g that $\Jac(P,\alpha(P)) \in K^*$.

Denote $\Jac(P,Q)= a$ and $\Jac(P,\alpha(P))= b$, 
where $a,b \in K^*$.
$$
0= 1-1= \Jac(P,\alpha(P)/b)- \Jac(P,Q/a)= \Jac(P, \alpha(P)/b - Q/a),
$$
so ~\cite[Theorem 1]{wang} implies that 
$\alpha(P)/b - Q/a = H(P)$, where $H(t) \in K[t]$.
Then $\alpha(P) = (bQ)/a + bH(P) \in T$.

{}From the chain rule, since 
$\Jac(P,Q)= a$ and $\Jac(\alpha(x),\alpha(y))= -1$,
we get $\Jac(\alpha(P),\alpha(Q)) = -a$.
$$
0= \Jac(\alpha(P),\alpha(Q)/(-a))- \Jac(\alpha(P),P/(-b))= 
\Jac(\alpha(P),\alpha(Q)/(-a)-P/(-b)),
$$
so ~\cite[Theorem 1]{wang} implies that 
$\alpha(Q)/(-a) - P/(-b) = G(\alpha(P))$, where $G(t) \in K[t]$.
Then,
$$
\alpha(Q) = (aP)/b - aG(\alpha(P))= (aP)/b - aG((bQ)/a + bH(P)) \in T.
$$
The restriction theorem \ref{res thm} implies that $f$ is invertible.
\end{proof}
An obvious generalization is as follows:

\begin{definition}[A generalized $\epsilon$-endomorphism]
Let $\epsilon$ be an involution on $K[x,y]$.
Let $f$ be an endomorphism of $K[x,y]$.

We say that $f$ is a generalized $\epsilon$-endomorphism 
if the following two conditions are satisfied:
\begin{itemize}
\item [(1)] $\Jac(P,Q) \in K^*$.
\item [(2)] $\Jac(P,\epsilon(P)) \in K^*$ or $\Jac(Q,\epsilon(Q)) \in K^*$.
\end{itemize}
\end{definition}

A generalized $\epsilon$-endomorphism is not a generalization of an $\epsilon$-endomorphism
that satisfies $\Jac(P,Q) \in K^*$; 
for example, 
$f(x)= x+y^2$ and $f(y)= y$
is a $\beta$-endomorphism (automorphism) that satisfies 
$\Jac(f(x),f(y))= 1$, where $\beta$ is the involution given by
$\beta(x)= x$ and $\beta(y)= -y$.
But, 
$$
\Jac(f(x), \beta(f(x)))= \Jac(x+y^2,x+y^2)= 0 
$$
and
$$
\Jac(f(y), \beta(f(y)))= \Jac(y,-y)= 0.
$$
Notice that $f$ is a generalized $\alpha$-endomorphism, since
$$
\Jac(f(y),\alpha(f(y)))= \Jac(y,x)= -1 \in K^*
$$.

\begin{theorem}\label{generalized epsilon is invertible}
Assume $K$ is the field of complex numbers.
If there exists an involution $\epsilon$ on $K[x,y]$ such that 
$f$ is a generalized $\epsilon$-endomorphism, 
then $f$ is invertible.
\end{theorem}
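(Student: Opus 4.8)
The plan is to mimic the proof of Theorem \ref{generalized alpha is invertible} almost verbatim, replacing the exchange involution $\alpha$ by the arbitrary involution $\epsilon$; the only point that requires care is the replacement of the value $-1$ for $\Jac(\alpha(x),\alpha(y))$ by the value $\Jac(\epsilon(x),\epsilon(y))$, which we must first observe is a nonzero scalar. First I would record this: since $\epsilon$ is an involution, hence an automorphism of $K[x,y]$, we have $\Jac(\epsilon(x),\epsilon(y)) \in K^*$; call this scalar $c$. By assumption, condition (1) gives $\Jac(P,Q)=a\in K^*$, and condition (2) gives, without loss of generality, $\Jac(P,\epsilon(P))=b\in K^*$ (the case $\Jac(Q,\epsilon(Q))\in K^*$ being symmetric, with the roles of $P$ and $Q$ interchanged throughout).

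Next I would run the first Cheng–Mckay–Wang argument exactly as before: from
$$
0= 1-1= \Jac(P,\epsilon(P)/b)- \Jac(P,Q/a)= \Jac(P, \epsilon(P)/b - Q/a),
$$
Theorem \cite[Theorem 1]{wang} gives $\epsilon(P)/b - Q/a = H(P)$ for some $H(t)\in K[t]$, whence $\epsilon(P) = (bQ)/a + bH(P)\in T$. Then I would use the chain rule applied to the composite $\epsilon \circ f$: since $\Jac(P,Q)=a$ and $\Jac(\epsilon(x),\epsilon(y))=c$, we get $\Jac(\epsilon(P),\epsilon(Q)) = ca$. Now
$$
0= \Jac(\epsilon(P),\epsilon(Q)/(ca))- \Jac(\epsilon(P),P/b)= \Jac(\epsilon(P),\epsilon(Q)/(ca)-P/b),
$$
so Theorem \cite[Theorem 1]{wang} again yields $\epsilon(Q)/(ca) - P/b = G(\epsilon(P))$ for some $G(t)\in K[t]$, hence
$$
\epsilon(Q) = (caP)/b + ca\,G(\epsilon(P)) = (caP)/b + ca\,G\!\big((bQ)/a + bH(P)\big) \in T.
$$
Thus $\epsilon(P)\in T$ and $\epsilon(Q)\in T$, i.e.\ $f$ satisfies the $\epsilon$ restriction condition, so the $\epsilon$ restriction theorem \ref{generalized res thm} gives that $f$ is invertible.

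There is no real obstacle here beyond bookkeeping; the substantive content is entirely carried by the already-cited results \cite[Theorem 1]{wang} and Theorem \ref{generalized res thm}. The one place one should not be careless is the application of the chain rule to obtain $\Jac(\epsilon(P),\epsilon(Q))$: one must view $\epsilon(P)=(\epsilon f)(x)$ and $\epsilon(Q)=(\epsilon f)(y)$ and multiply the Jacobian of $f$ by the Jacobian of $\epsilon$, being sure that the latter is computed in the right variables; since $\epsilon$ is a genuine automorphism this Jacobian is a unit, which is all that is needed. The symmetric case, where instead $\Jac(Q,\epsilon(Q))\in K^*$, is handled by the identical argument after swapping $P\leftrightarrow Q$, and I would simply remark on this rather than write it out.
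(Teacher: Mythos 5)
Your proof is correct and follows essentially the same route as the paper's: two applications of the Cheng--Mckay--Wang theorem to put $\epsilon(P)$ and then $\epsilon(Q)$ into $T$, followed by the $\epsilon$ restriction theorem \ref{generalized res thm}. One small slip: since $\Jac(\epsilon(P),P)=-b$, the zero-Jacobian combination in your second step should be $\epsilon(Q)/(ca)-P/(-b)$ rather than $\epsilon(Q)/(ca)-P/b$ (as written your displayed difference equals $2$, not $0$); this does not affect the conclusion $\epsilon(Q)\in T$, and your replacement of the paper's $-a$ by $ca$ via the chain rule is the right bookkeeping.
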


\begin{proof}
$f$ is a generalized $\epsilon$-endomorphism, hence by definition:
$\Jac(P,Q) \in K^*$ and
$\Jac(P,\epsilon(P)) \in K^*$ or $\Jac(Q,\epsilon(Q)) \in K^*$. 

Assume w.l.o.g that $\Jac(P,\epsilon(P)) \in K^*$.

Denote $\Jac(P,Q)= a$ and $\Jac(P,\epsilon(P))= b$, 
where $a,b \in K^*$.
$$
0= \Jac(P,\epsilon(P)/b)- \Jac(P,Q/a)= \Jac(P, \epsilon(P)/b - Q/a),
$$
so ~\cite[Theorem 1]{wang} implies that 
$\epsilon(P)/b - Q/a = H(P)$, where $H(t) \in K[t]$.
Then $\epsilon(P) = (bQ)/a + bH(P) \in T$.

{}From the chain rule, since
$\Jac(P,Q)= a \in K^*$ and
$\Jac(\epsilon(x),\epsilon(y)) \in K^*$,
we get 
$\Jac(\epsilon(P),\epsilon(Q)) \in K^*$.
$$
0= \Jac(\epsilon(P),\epsilon(Q)/(-a))- \Jac(\epsilon(P),P/(-b))= \Jac(\epsilon(P), \epsilon(Q)/(-a) - P/(-b)),
$$
so ~\cite[Theorem 1]{wang} implies that 
$\epsilon(Q)/(-a) - P/(-b) = G(\epsilon(P))$, where $G(t) \in K[t]$.
Then,
$$
\epsilon(Q) = (aP)/b - aG(\epsilon(P))= (aP)/b - aG((bQ)/a + bH(P)) \in T.
$$
The $\epsilon$ restriction theorem \ref{generalized res thm} implies that $f$ is invertible.
\end{proof}

The converse of Theorem \ref{generalized alpha is invertible} is not true; 
namely, an automorphism of $K[x,y]$ need not be a generalized $\alpha$-endomorphism. 
For example, 
$P:= h(x)=x+y$ and $Q:= h(y)=x-y$
is an automorphism of $K[x,y]$ 
which is not a generalized $\alpha$-endomorphism, 
since
$$
\Jac(P,\alpha(P))= 0 , \Jac(Q,\alpha(Q))= 0.
$$

$h$ is not a generalized $\alpha$-endomorphism, but it is a generalized 
$\beta$-endomorphism, 
where $\beta$ is the involution on $K[x,y]$ given by
$\beta(x)= x , \beta(y)= -y$.
Indeed,
$\Jac(P,\beta(P))= \Jac(x+y,x-y) \in K^*$

(and 
$\Jac(Q,\beta(Q))= \Jac(x-y,x+y) \in K^*$).

In view of this it seems natural to ask the following question:
Given an automorphism $g$ of $K[x,y]$, is there exists an involution 
$\epsilon$ on $K[x,y]$ such that 
$g$ is a generalized $\epsilon$-endomorphism?

Thus far we only managed to show that for every generator of the group of automorphisms 
of $K[x,y]$ the answer to this question is positive.
Indeed, \begin{itemize}
\item Let $g$ be linear with $g(x)=ax+by$, $a,b \in K$.
If $a=0$, then $g(x)=by$, so $g$ is a generalized $\alpha$-endomorphism:
$$
\Jac(g(x),\alpha(g(x)))= \Jac(by,bx) \in K^*.
$$
If $b=0$, then $g(x)=ax$, so $g$ is a generalized $\alpha$-endomorphism:
$$
\Jac(g(x),\alpha(g(x)))= \Jac(ax,ay) \in K^*.
$$
If both $a$ and $b$ are non-zero, then $g(x)=ax+by$,
so $g$ is a generalized $\beta$-endomorphism
($\beta(x)= x$ and $\beta(y)= -y$):
$$
\Jac(g(x),\beta(g(x)))= \Jac(ax+by,ax-by)= -2ab \in K^*.
$$
\item Let $g$ be triangular. Then it is clear that $g$ is a generalized 
$\alpha$-endomorphism.
\end{itemize}

\begin{remark}
A given endomorphism can be a generalized $\epsilon$-endomorphism,
for more that one involution $\epsilon$.
For example, let $g$ be the automorphism given by 
$g(x)=ax+by$ and $g(y)=cx+dy$,
where 
$ad-bc \neq 0$ and $a \neq \pm b$.
It is easy to see that 
$$
\Jac(g(x),\alpha(g(x)))= a^2-b^2 \in K^*.
$$
So $g$ is a generalized $\alpha$-endomorphism and we have already seen that 
$g$ is a generalized $\beta$-endomorphism
($\beta(x)= x$ and $\beta(y)= -y$).
\end{remark}

\subsection{$P$ is symmetric or skew-symmetric}
Denote the set of symmetric elements (with respect to $\alpha$) by $S_{\alpha}$ and 
denote the set of skew-symmetric elements (with respect to $\alpha$) by $K_{\alpha}$.
The set of symmetric elements is $K$-linearly spanned by 
$\{x^ny^m + x^my^n | n \geq m \}$, 
while the set of skew-symmetric elements is $K$-linearly spanned by 
$\{x^ny^m - x^my^n | n > m \}$.

\begin{lemma}\label{lemma a b}
\begin{itemize}
\item [(1)] If $a \in S_{\alpha}$ and $b \in S_{\alpha}$, 
then $\Jac(a,b) \in K_{\alpha}$.
\item [(2)] If $a \in K_{\alpha}$ and $b \in K_{\alpha}$, 
then $\Jac(a,b) \in K_{\alpha}$.
\item [(3)] If $a \in S_{\alpha}$ and $b \in K_{\alpha}$, 
then $\Jac(a,b) \in S_{\alpha}$.
\end{itemize}
\end{lemma}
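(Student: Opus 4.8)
The plan is to reduce all three parts to a single identity describing how the exchange involution $\alpha$ interacts with the Jacobian. Write $\partial_x,\partial_y$ for the two partial derivations, so that $\Jac(a,b)=\partial_x(a)\,\partial_y(b)-\partial_y(a)\,\partial_x(b)$. The elementary observation I would record first is that, because $\alpha(p)$ is obtained from $p$ by swapping the roles of $x$ and $y$, the chain rule gives
\[
\alpha\bigl(\partial_x(p)\bigr)=\partial_y\bigl(\alpha(p)\bigr)\qquad\text{and}\qquad\alpha\bigl(\partial_y(p)\bigr)=\partial_x\bigl(\alpha(p)\bigr)
\]
for every $p\in K[x,y]$. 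Since $\alpha$ is a ring homomorphism, applying $\alpha$ to the defining expression for $\Jac(a,b)$ and substituting these two relations yields
\[
\alpha\bigl(\Jac(a,b)\bigr)=\partial_y(\alpha a)\,\partial_x(\alpha b)-\partial_x(\alpha a)\,\partial_y(\alpha b)=-\Jac\bigl(\alpha(a),\alpha(b)\bigr).
\]

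With this identity and the $K$-bilinearity of the Jacobian (so $\Jac(\lambda a,\mu b)=\lambda\mu\,\Jac(a,b)$ for $\lambda,\mu\in K$), each part is immediate. For (1): $a,b\in S_\alpha$ means $\alpha(a)=a$, $\alpha(b)=b$, hence $\alpha(\Jac(a,b))=-\Jac(a,b)$, i.e. $\Jac(a,b)\in K_\alpha$. For (2): $a,b\in K_\alpha$ gives $\alpha(\Jac(a,b))=-\Jac(-a,-b)=-\Jac(a,b)$, so again $\Jac(a,b)\in K_\alpha$. For (3): $a\in S_\alpha$ and $b\in K_\alpha$ give $\alpha(\Jac(a,b))=-\Jac(a,-b)=\Jac(a,b)$, so $\Jac(a,b)\in S_\alpha$.

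There is essentially no obstacle here; the only thing to watch is the sign bookkeeping in the derivative-swapping identity, since a careless computation there could flip the overall sign and blur the distinction between parts (1)/(2) and part (3). As an alternative, more computational route one could instead use the spanning sets $\{x^ny^m\pm x^my^n\}$ stated above for $S_\alpha$ and $K_\alpha$ and compute $\Jac$ of pairs of such generators directly, but the involution identity together with bilinearity is considerably cleaner, so I would present that version.
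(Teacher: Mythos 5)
Your proof is correct, but it takes a different route from the paper. The paper proves part (1) by expanding $a$ and $b$ in the spanning sets $\{x^iy^j+x^jy^i\}$, invoking $K$-bilinearity of $\Jac$, and verifying by an explicit computation that each $\Jac(x^iy^j+x^jy^i,\,x^ky^l+x^ly^k)$ is skew-symmetric; parts (2) and (3) are then declared ``similar.'' You instead isolate the structural identity $\alpha(\Jac(a,b))=-\Jac(\alpha(a),\alpha(b))$, from which all three parts drop out by substituting $\alpha(a)=\pm a$, $\alpha(b)=\pm b$; your derivation of that identity via $\alpha(\partial_x p)=\partial_y(\alpha p)$ is sound, and the sign bookkeeping in (1)--(3) checks out. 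Your identity is in fact the chain rule $\Jac(\alpha(a),\alpha(b))=\alpha(\Jac(a,b))\cdot\Jac(\alpha(x),\alpha(y))$ specialized to $\Jac(\alpha(x),\alpha(y))=\Jac(y,x)=-1$, a computation the paper itself performs elsewhere (in the proof of Theorem \ref{generalized alpha is invertible}), so your argument fits naturally into the paper's toolkit. What your approach buys is uniformity and brevity: all three cases are handled at once with no generator-by-generator computation, and the argument visibly generalizes to any involution $\epsilon$ with $\Jac(\epsilon(x),\epsilon(y))=-1$. What the paper's computation buys is an explicit formula for the Jacobian of two generators, though nothing later in the paper uses that formula.
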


The analogue result in the first Weyl algebra is also true (and is easier to prove), 
where instead of the Jacobian take the commutator.

\begin{proof}
We shall only prove $(1)$; the proofs of $(2)$ and $(3)$ are similar.
Write 
$$
a= \sum a_{ij}(x^iy^j + x^jy^i) , b= \sum b_{kl}(x^ky^l + x^ly^k).
$$
The Jacobian is $K$-linear, so 
$$
\Jac(a,b)= \sum \sum a_{ij}b_{kl} \Jac(x^iy^j + x^jy^i, x^ky^l + x^ly^k).
$$
Since the sum of skew-symmetric elements is skew-symmetric,
it suffices to show that each 
$\Jac(x^iy^j + x^jy^i, x^ky^l + x^ly^k)$ is skew-symmetric.

Indeed, a direct computation yields:

$\Jac(x^iy^j + x^jy^i, x^ky^l + x^ly^k)= $
$(li-kj)(x^{k+i-1}y^{j+l-1} - x^{j+l-1}y^{k+i-1}) + $

$(ik-jl)(x^{i+l-1}y^{j+k-1} - x^{j+k-1}y^{i+l-1})$.

\end{proof}

We again assume that $K$ is the field of complex numbers, because in our proof we wish to use 
~\cite[Theorem 1]{wang}.

\begin{theorem}\label{P symmetric}
Assume $K$ is the field of complex numbers.
Assume $f$ is an endomorphism of $K[x,y]$ that satisfies
$\Jac(P,Q)\in K^*$. 
Assume that one of the following conditions is satisfied:\begin{itemize}
\item $P$ is symmetric.
\item $P$ is skew-symmetric.
\item $Q$ is symmetric.
\item $Q$ is skew-symmetric.
\end{itemize}
Where by symmetric or skew-symmetric we mean symmetric or skew-symmetric with respect to $\alpha$.
Then $f$ is invertible.
\end{theorem}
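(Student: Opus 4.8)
The plan is to show that $f$ satisfies the restriction condition and then apply the restriction theorem \ref{res thm}. Recall that the restriction condition asks exactly for $\alpha(P)\in T$ and $\alpha(Q)\in T$, where $T=K[P,Q]$. The key observation is that each of the four hypotheses hands us one of these two memberships for free: if $P$ is symmetric then $\alpha(P)=P\in T$, if $P$ is skew-symmetric then $\alpha(P)=-P\in T$, and likewise for $Q$. So the real work is only to establish the remaining membership, and for that I would mimic the argument in the proof of Theorem \ref{generalized alpha is invertible}, using \cite[Theorem 1]{wang}.

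First I would cut the four cases down to two. If $Q$ is symmetric or skew-symmetric, replace $f$ by $f\alpha$: then $(f\alpha)(x)=f(y)=Q$ and $(f\alpha)(y)=f(x)=P$, we still have $\Jac(Q,P)=-\Jac(P,Q)\in K^*$, and $f\alpha$ is invertible if and only if $f$ is. This reduces to the case in which the \emph{first} coordinate is symmetric or skew-symmetric, so from now on assume $P$ is symmetric or skew-symmetric; in either case $\alpha(P)\in T$ already.

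Next I would compute $\Jac(\alpha(P),\alpha(Q))$ by the chain rule. Writing $a:=\Jac(P,Q)\in K^*$ and using that $\alpha$ is linear with $\Jac(\alpha(x),\alpha(y))=-1$, we get $\Jac(\alpha(P),\alpha(Q))=-a$. In the symmetric case $\alpha(P)=P$, so this says $\Jac(P,\alpha(Q))=-a$, hence $\Jac(P,\alpha(Q)+Q)=0$; in the skew-symmetric case $\alpha(P)=-P$, so $\Jac(P,\alpha(Q))=a$, hence $\Jac(P,\alpha(Q)-Q)=0$. Since $\Jac(P,Q)=a\in K^*$, \cite[Theorem 1]{wang} (applied with $A=P$, $B=Q$) gives in the symmetric case $\alpha(Q)+Q=H(P)$ for some $H(t)\in K[t]$, so $\alpha(Q)=-Q+H(P)\in T$, and in the skew-symmetric case $\alpha(Q)-Q=H(P)$, so $\alpha(Q)=Q+H(P)\in T$. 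Either way $\alpha(P)\in T$ and $\alpha(Q)\in T$, so $f$ satisfies the restriction condition, and Theorem \ref{res thm} yields that $f$ is invertible.

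I do not expect a serious obstacle here. The computations are routine; the only points requiring a little care are the chain-rule identity $\Jac(\alpha(P),\alpha(Q))=-\Jac(P,Q)$ and verifying that the hypotheses of \cite[Theorem 1]{wang} are genuinely met, which is precisely why we must take $K$ to be the field of complex numbers. (Note that Lemma \ref{lemma a b} is not needed for this particular statement.)
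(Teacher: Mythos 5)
Your proof is correct and follows essentially the same route as the paper: use \cite[Theorem 1]{wang} to conclude $\alpha(Q)\in T$ (the hypothesis already gives $\alpha(P)=\pm P\in T$), then invoke the restriction theorem \ref{res thm}. The only cosmetic differences are that you compute $\Jac(P,\alpha(Q))=\mp a$ explicitly via the chain rule where the paper just names it $b\in K^*$, and you make the paper's ``the other cases are similar'' precise by replacing $f$ with $f\alpha$; both are fine.
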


\begin{remark}\label{remark P symmetric}
By Lemma \ref{lemma a b} the Jacobian of two symmetric or two skew-symmetric elements is skew-symmetric, 
hence it is impossible to have both $P$ and $Q$ symmetric or both 
$P$ and $Q$ skew-symmetric.

Actually, if $P$ is symmetric and $Q$ is skew-symmetric (or vice-versa), 
then it is immediate that such $f$ 
(= an endomorphism of $K[x,y]$ that satisfies $\Jac(P,Q)\in K^*$) is invertible:

Write $s:= P$ and $k:= Q$ ($s$ is symmetric and $k$ is skew-symmetric).
Define 
$g(x):= s+k$ and $g(y):= s-k$.
It is easy to see that $g$ is an $\alpha$-endomorphism 
of $K[x,y]$ that satisfies 

$\Jac(g(x),g(y)) \in K^*$: 
$g$ is an endomorphism of $K[x,y]$ that satisfies 

$\Jac(g(x),g(y)) \in K^*$:
Let 
$h(x)=x+y$ and $h(y)=x-y$. 
We have
$$
(fh)(x)= f(h(x))= f(x+y)= f(x)+f(y)= P+Q= s+k= g(x),
$$
and
$$
(fh)(y)= f(h(y))= f(x-y)= f(x)-f(y)= P-Q= s-k= g(y),
$$
so $g= fh$.
$$
\Jac(g(x),g(y))= \Jac((fh)(x),(fh)(y)) \in  K^*,
$$
since 
$\Jac(f(x),f(y)) \in K^*$ and 
$\Jac(h(x),h(y)) \in K^*$.
Another argument:
$$
\Jac(g(x),g(y))= \Jac(s+k,s-k)= 2\Jac(k,s) \in K^*.
$$                               
$g$ preserves $\alpha$:
$$
(g \alpha)(x)= g(\alpha(x))= g(y)= s-k= \alpha(s)+\alpha(k)= \alpha(s+k)= \alpha(g(x))= (\alpha g)(x)
$$
and
$$
(g \alpha)(y)= g(\alpha(y))= g(x)= s+k= \alpha(s)-\alpha(k)= \alpha(s-k)= \alpha(g(y))= (\alpha g)(y).
$$
{}From ~\cite[Proposition 4.1]{mos val} $g$ is invertible, hence $f$ is invertible
($f= gh^{-1}$, $g$ and $h^{-1}$ are automorphisms).
\end{remark}

\begin{proof}
Assume that $P$ is symmetric, namely $\alpha(P)= P \in T$.
Clearly, 
$$
\Jac(P,\alpha(Q))= \Jac(\alpha(P),\alpha(Q)) \in K^*.
$$
Write: $\Jac(P,Q)= a$ and $\Jac(P,\alpha(Q))= b$, 
where $a,b \in K^*$.
Then,
$$
\Jac(P,Q/a -\alpha(Q)/b)= \Jac(P,Q/a)- \Jac(P,\alpha(Q)/b)= 0,
$$ 
so from ~\cite[Theorem 1]{wang} we have 
$Q/a -\alpha(Q)/b= H(P)$ where $H(t) \in K[t]$.
Hence, $\alpha(Q)= bQ/a - bH(P) \in T$.
The restriction theorem \ref{res thm} implies that $f$ is invertible.

Showing that each of the other three conditions implies that $f$ is invertible is similar.
\end{proof}

Let $\epsilon$ be an involution on $K[x,y]$ and let $w \in K[x,y]$. 
$w$ is symmetric with respect to $\epsilon$ if $\epsilon(w)= w$, 
and $w$ is skew-symmetric with respect to $\epsilon$ if $\epsilon(w)= -w$.

One way to generalize Theorem \ref{P symmetric} is as follows:

\begin{theorem}\label{generalized P symmetric}
Assume $K$ is the field of complex numbers.
Assume $f$ is an endomorphism of $K[x,y]$ that satisfies
$\Jac(P,Q)\in K^*$. 
Assume that one of the following conditions is satisfied:\begin{itemize}
\item $P$ is symmetric.
\item $P$ is skew-symmetric.
\item $Q$ is symmetric.
\item $Q$ is skew-symmetric.
\end{itemize}
Where by symmetric or skew-symmetric we mean symmetric or skew-symmetric 
with respect to some involution $\epsilon$ on $K[x,y]$.
Then $f$ is invertible.
\end{theorem}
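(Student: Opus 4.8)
The plan is to follow the proof of Theorem \ref{P symmetric} essentially word for word, replacing the exchange involution $\alpha$ throughout by the given involution $\epsilon$ and invoking the $\epsilon$ restriction theorem \ref{generalized res thm} in place of the restriction theorem \ref{res thm}. By hypothesis there is an involution $\epsilon$ on $K[x,y]$ for which one of $P,Q$ is symmetric or skew-symmetric with respect to $\epsilon$; the four cases being symmetric to one another, it suffices to treat, say, the case $\epsilon(P)=P$. The goal is to verify that $f$ satisfies the $\epsilon$ restriction condition, i.e. that $\epsilon(P)\in T$ and $\epsilon(Q)\in T$. The first membership is immediate, since $\epsilon(P)=P=f(x)\in T$, so all the work lies in showing $\epsilon(Q)\in T$.

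First I would record the Jacobian bookkeeping. An involution is in particular an automorphism of $K[x,y]$, so $\Jac(\epsilon(x),\epsilon(y))\in K^*$, and the chain rule gives $\Jac(\epsilon(P),\epsilon(Q))=\Jac(\epsilon(x),\epsilon(y))\,\Jac(P,Q)\in K^*$; since $\epsilon(P)=P$ this says $\Jac(P,\epsilon(Q))\in K^*$. Put $a:=\Jac(P,Q)\in K^*$ and $b:=\Jac(P,\epsilon(Q))\in K^*$. Then $K$-linearity of the Jacobian in its second argument yields
$$
\Jac\!\left(P,\ \frac{Q}{a}-\frac{\epsilon(Q)}{b}\right)=\Jac\!\left(P,\frac{Q}{a}\right)-\Jac\!\left(P,\frac{\epsilon(Q)}{b}\right)=1-1=0 .
$$

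Now I would apply \cite[Theorem 1]{wang} --- this is the one place the hypothesis $K=\mathbb{C}$ is used. From $\Jac(P,Q)\in K^*$ and the vanishing above it follows that $\frac{Q}{a}-\frac{\epsilon(Q)}{b}=H(P)$ for some $H(t)\in K[t]$, hence $\epsilon(Q)=\frac{b}{a}Q-bH(P)\in T$. Therefore $f$ satisfies the $\epsilon$ restriction condition, and Theorem \ref{generalized res thm} gives that $f$ is invertible. The other three cases run identically: if $P$ is skew-symmetric, one has $\epsilon(P)=-P\in T$ and $\Jac(P,\epsilon(Q))=-\Jac(\epsilon(P),\epsilon(Q))\in K^*$, after which the same computation applies; and if $Q$ is symmetric or skew-symmetric one simply interchanges the roles of $P$ and $Q$, using $\Jac(Q,P)=-a\in K^*$ and $\Jac(\epsilon(P),\epsilon(Q))\in K^*$ to invoke \cite[Theorem 1]{wang} in the variable $Q$ and conclude $\epsilon(P)\in T$.

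I do not anticipate a real obstacle here. The only non-formal ingredient is \cite[Theorem 1]{wang}, which is available over $\mathbb{C}$, and everything else is the chain rule, $K$-linearity of the Jacobian, and the $\epsilon$ restriction theorem. The single point deserving a line of care is that $\epsilon$ is only assumed to exist rather than given, so one should note explicitly that any involution, being an automorphism, has invertible Jacobian; and, in contrast with the $\alpha$-case, no analogue of Lemma \ref{lemma a b} is needed --- that lemma served only Remark \ref{remark P symmetric}, not the proof of Theorem \ref{P symmetric} itself.
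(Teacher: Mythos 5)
Your proposal is correct and follows essentially the same route as the paper's own proof: symmetrize to the case $\epsilon(P)=P$, use the chain rule to get $\Jac(P,\epsilon(Q))\in K^*$, apply \cite[Theorem 1]{wang} to conclude $\epsilon(Q)\in T$, and invoke the $\epsilon$ restriction theorem. If anything, you are slightly more careful than the printed proof, which at the end cites the ($\alpha$) restriction theorem where the $\epsilon$ restriction theorem \ref{generalized res thm} is what is actually needed.
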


\begin{proof}
Assume that $P$ is symmetric with respect to $\epsilon$, 
namely $\epsilon(P)= P \in T$.
Clearly, 
$$
\Jac(P,\epsilon(Q))= \Jac(\epsilon(P),\epsilon(Q)) \in K^*.
$$
Write: $\Jac(P,Q)= a$ and $\Jac(P,\epsilon(Q))= b$, 
where $a,b \in K^*$.
Then,
$$
\Jac(P,Q/a -\epsilon(Q)/b)= \Jac(P,Q/a)- \Jac(P,\epsilon(Q)/b)= 0, 
$$
so from ~\cite[Theorem 1]{wang} we have 
$Q/a -\epsilon(Q)/b= H(P)$, where $H(t) \in K[t]$.
So $\epsilon(Q)= bQ/a - bH(P) \in T$.
The restriction theorem \ref{res thm} implies that $f$ is invertible.

Showing that each of the other three conditions implies that $f$ is invertible is similar.
\end{proof}

The converse of Theorem \ref{P symmetric} is not true; for example,
$g(x)=x$ and $g(y)=y+x^2$ is invertible, 
but non of 
$g(x),g(y)$ is symmetric or skew-symmetric with respect to $\alpha$.

It seems natural to ask the following question:
Given an automorphism $g$ of $K[x,y]$, is there exists an involution 
$\epsilon$ on $K[x,y]$ such that 
$g(x)$ or $g(y)$ is symmetric or skew-symmetric with respect to $\epsilon$?

Thus far we only managed to show that for every generator of the group of automorphisms 
of $K[x,y]$ the answer to this question is positive:
\begin{itemize}
\item Let $g$ be linear with $g(x)=ax+by$, 
$a,b \in K$.
If $a=0$, then $g(x)=by$ is symmetric with respect to the involution 
$x\mapsto -x$ and $y\mapsto y$.
If $b=0$, then $g(x)=ax$ is symmetric with respect to the involution 
$x\mapsto x$ and $y\mapsto -y$.
If both $a$ and $b$ are non-zero, then $g(x)=ax+by$ is symmetric 
with respect to the following involution $\epsilon$ given by
$\epsilon(x)= (b/a)y$ and $\epsilon(y)= (a/b)x$.
Indeed, 
$$
\epsilon(ax+by)= a\epsilon(x)+b\epsilon(y)= a(b/a)y+b(a/b)x= by+ax.
$$
(obviously, 
$$(\epsilon)^2(x)= \epsilon(\epsilon(x))= \epsilon((b/a)y)= 
(b/a) \epsilon(y)= (b/a)(a/b)x= x
$$
and
$$
(\epsilon)^2(y)= \epsilon(\epsilon(y))= \epsilon((a/b)x)= 
(a/b) \epsilon(x)= (a/b)(b/a)y= y.
$$
\item Let $g$ be triangular. Then it is clear that there exists an involution 
on $K[x,y]$ such that 
$g(x)$ or $g(y)$ is symmetric with respect to it
(for example, if $g(x)=x$ and $g(y)=y+x^3$, then take
$x\mapsto x$ and $y\mapsto -y$.
\end{itemize}

\begin{remark}
For a given endomorphism $g$ of $K[x,y]$, $g(x)$ can be symmetric (or skew-symmetric) 
with respect to more that one involution.
For example, let $g$ be the automorphism given by 
$g(x)=ax+by$ and $g(y)=cx+dy$,
where 
$ad-bc \neq 0$ and $a \neq \pm b$.

It is easy to see that 
$$
\Jac(g(x),\alpha(g(x)))= a^2-b^2 \in K^*.
$$
So $g$ is a generalized $\alpha$-endomorphism and we have already seen that 
$g$ is a generalized $\beta$-endomorphism
($\beta(x)= x$ and $\beta(y)= -y$).
\end{remark}

\begin{proposition}
Assume $f$ is an endomorphism of $K[x,y]$. 
The following conditions are equivalent:\begin{itemize}
\item [(1)] There exists an involution $\epsilon$ on $K[x,y]$ such that
$P$ is symmetric (skew-symmetric) with respect to $\epsilon$.
\item [(2)] There exists an automorphism $g$ of $K[x,y]$ such that
$g(P)$ is symmetric (skew-symmetric) with respect to $\alpha$.
\end{itemize}
\end{proposition}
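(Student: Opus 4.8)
The whole statement is a formal consequence of Lemma~\ref{conjugate}, which says that every involution on $K[x,y]$ is conjugate, via an automorphism, to the exchange involution $\alpha$. I would treat the ``symmetric'' reading in full and then remark that the ``skew-symmetric'' reading is word-for-word the same, the only difference being a sign that is harmless because every automorphism of $K[x,y]$ is a $K$-algebra homomorphism and hence commutes with multiplication by $-1$.

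For $(1)\Rightarrow(2)$ I would start from an involution $\epsilon$ on $K[x,y]$ with $\epsilon(P)=P$ (resp. $\epsilon(P)=-P$). By Lemma~\ref{conjugate} there is an automorphism $g$ of $K[x,y]$ with $\epsilon=g^{-1}\alpha g$. Then $g^{-1}\alpha g(P)=P$, and applying $g$ to both sides gives $\alpha(g(P))=g(P)$; in the skew case $g^{-1}\alpha g(P)=-P$ gives $\alpha(g(P))=g(-P)=-g(P)$. Either way $g(P)$ is symmetric (resp. skew-symmetric) with respect to $\alpha$, which is exactly $(2)$ with this same $g$.

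For $(2)\Rightarrow(1)$ I would start from an automorphism $g$ with $\alpha(g(P))=g(P)$ (resp. $\alpha(g(P))=-g(P)$) and simply set $\epsilon:=g^{-1}\alpha g$. One checks at once that $\epsilon$ is an involution: $\epsilon^2=g^{-1}\alpha^2 g=\ide$, and $\epsilon\neq\ide$ since $\alpha\neq\ide$. Moreover $\epsilon(P)=g^{-1}\bigl(\alpha(g(P))\bigr)=g^{-1}(g(P))=P$ in the symmetric case, and $\epsilon(P)=g^{-1}(-g(P))=-P$ in the skew case. Hence $P$ is symmetric (resp. skew-symmetric) with respect to $\epsilon$, giving $(1)$.

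There is no real obstacle here; the only points requiring a little care are bookkeeping ones: checking that the automorphism $g$ furnished by Lemma~\ref{conjugate} is the \emph{same} $g$ that appears in condition $(2)$ (so that one does not accidentally get $g^{-1}(P)$ instead of $g(P)$), and verifying that the sign in the skew-symmetric case propagates correctly through $g$ and $g^{-1}$, which it does precisely because these are $K$-algebra homomorphisms.
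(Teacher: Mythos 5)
Your proposal is correct and follows essentially the same route as the paper: both directions rest on Lemma~\ref{conjugate}, conjugating $\epsilon$ to $\alpha$ (resp.\ defining $\epsilon:=g^{-1}\alpha g$) and transporting the (skew-)symmetry condition through the $K$-algebra automorphism $g$. The extra checks you include (that $g^{-1}\alpha g$ is indeed an involution and that the sign survives conjugation) are correct and harmless additions to what the paper writes.
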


\begin{proof}
$(1)\Rightarrow(2)$: 
By Lemma \ref{conjugate}, $\epsilon= g^{-1} \alpha g$ for some automorphism 
$g$ of $K[x,y]$.

$P$ is symmetric with respect to $\epsilon$: $\epsilon(P)=P$.

Therefore, $(g^{-1}\alpha g)(P)=P$, hence
$\alpha (g(P))=g(P)$, so $g(P)$ is symmetric with respect to $\alpha$.

$(2)\Rightarrow(1)$:
$g(P)$ is symmetric with respect to $\alpha$: 
$\alpha(g(P))=g(P)$.

Hence, $(g^{-1}\alpha g)(P)= P$.
Then $P$ is symmetric with respect to the involution $g^{-1} \alpha g$.

The skew-symmetric version can be proved similarly. 
\end{proof}

Another way to generalize Theorem \ref{P symmetric} is as follows;
notice that:\begin{itemize}
\item $K$ is not necessarily the field of complex numbers,
but any characteristic zero field.
\item There is no assumption on the Jacobian of
$f(x)$ and $f(y)$; however, we assume that there exist two special elements 
in the image of $f$ having a non-zero scalar Jacobian.
\end{itemize}

\begin{theorem}\label{s k}
Assume $f$ is an endomorphism of $K[x,y]$.
If there exist $s$ and $k$ in the image of $f$, $T$, such that
\begin{itemize}
\item $s$ is symmetric with respect to $\alpha$. 
\item $k$ is skew-symmetric with respect to $\alpha$.
\item $\Jac(s,k) \in K^*$,
\end{itemize}
then $f$ is invertible.
\end{theorem}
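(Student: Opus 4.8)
The plan is to reduce to \cite[Proposition 4.1]{mos val} by exactly the device used in Remark \ref{remark P symmetric}, but fed with the abstract pair $(s,k)$ rather than with $(f(x),f(y))$. The point of the theorem is that $\Jac(f(x),f(y))$ is never needed: it is $\Jac(s,k)$ that does the work.

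\emph{Step 1: build an $\alpha$-endomorphism out of $s$ and $k$.} Define an endomorphism $g$ of $K[x,y]$ by $g(x):=s+k$ and $g(y):=s-k$; this is legitimate since $K[x,y]$ is a polynomial ring, so an arbitrary choice of images of $x,y$ extends uniquely to an endomorphism. Using $\alpha(s)=s$ and $\alpha(k)=-k$ one checks that $g$ preserves $\alpha$:
$$
(g\alpha)(x)=g(y)=s-k=\alpha(s)+\alpha(k)=\alpha(s+k)=(\alpha g)(x),
$$
$$
(g\alpha)(y)=g(x)=s+k=\alpha(s)-\alpha(k)=\alpha(s-k)=(\alpha g)(y),
$$
so $g\alpha=\alpha g$. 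Moreover $\Jac(g(x),g(y))=\Jac(s+k,s-k)=2\Jac(k,s)\in K^*$, since $\Jac(s,k)\in K^*$ and $2\in K^*$. Thus $g$ is an $\alpha$-endomorphism satisfying the Jacobian hypothesis, and \cite[Proposition 4.1]{mos val} gives that $g$ is invertible.

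\emph{Step 2: transfer invertibility back to $f$.} Since $s,k\in T$, also $g(x)=s+k\in T$ and $g(y)=s-k\in T$, so $K[g(x),g(y)]\subseteq T$. As $g$ is an automorphism, $K[g(x),g(y)]=K[x,y]$, hence $T=K[x,y]$; that is, $f$ is surjective. A surjective endomorphism of $K[x,y]$ is an automorphism — the same concluding step as in the proof of Theorem \ref{res thm} — so $f$ is invertible.

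The argument is short, and I do not expect a real obstacle: the only subtlety is that, unlike in the earlier theorems, the hypothesis imposes nothing on $\Jac(f(x),f(y))$, but that quantity never enters — the constructed pair $(s+k,\,s-k)$ carries its own nonzero scalar Jacobian, which is precisely what \cite[Proposition 4.1]{mos val} consumes. If one prefers a fully self-contained finish in place of the ``surjective $\Rightarrow$ bijective'' appeal, one can write $s=F(P,Q)$ and $k=G(P,Q)$ with $F,G\in K[u,v]$ (possible since $s,k\in T=K[P,Q]$), observe that $g=f\circ\phi$ for the endomorphism $\phi$ with $\phi(x)=F(x,y)+G(x,y)$ and $\phi(y)=F(x,y)-G(x,y)$, and deduce from $g$ invertible that $f\circ(\phi g^{-1})=\ide$, so $f$ is right-invertible, hence surjective.
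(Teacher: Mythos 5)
Your proposal is correct and is essentially the paper's own proof: the same construction $g(x)=s+k$, $g(y)=s-k$, the same appeal to \cite[Proposition 4.1]{mos val}, and the same conclusion that $T\supseteq K[g(x),g(y)]=K[x,y]$ forces $f$ to be surjective, hence an automorphism. The extra verifications you include (that $g$ commutes with $\alpha$, and the alternative right-inverse finish) are consistent with the paper's argument and add nothing that changes the route.
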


\begin{proof}
Define 
$g(x):= s+k$ and $g(y):= s-k$.
It is easy to see that $g$ is an $\alpha$-endomorphism 
of $K[x,y]$ that satisfies 
$$
\Jac(g(x),g(y))= \Jac(s+k,s-k)= 2\Jac(k,s) \in K^*.
$$
{}From ~\cite[Proposition 4.1]{mos val} $g$ is invertible.
Hence,
$$
T \supseteq K[s+k,s-k]= K[g(x),g(y)]= K[x,y],
$$
namely $f$ is surjective.

Recall that a surjective endomorphism of $K[x,y]$ is an automorphism
(see \cite[page 343]{cohn}), so $f$ is an automorphism.
\end{proof}

\section{Acknowledgements}
I wish to thank Prof. C. Valqui for working with me on the starred Dixmier conjecture 
\cite{mos val}, Prof. J. Bell for his note \cite{bell} concerning involutions on $A_1$, 
and Prof. L. Rowen and Prof. U. Vishne for being my advisors.

\bibliographystyle{plain}

\end{document}